\documentclass[10pt]{article}
\usepackage{epsfig,amsmath,amssymb}
\usepackage{amssymb,amsmath,amsthm}
\usepackage{color}
\usepackage{caption}
\usepackage{latexsym}
\usepackage{multicol}
\usepackage[centering]{geometry} 
\usepackage{wrapfig}
\usepackage{anysize}
\usepackage{appendix}
\usepackage{float}

\makeatletter      % `@' is now a normal "letter' for LaTeX
\renewcommand{\ps@plain}{%
     \renewcommand{\@oddhead}{\textrm{ }\hfil\textrm{\thepage}}%
     \renewcommand{\@evenhead}{\@oddhead}%
     \renewcommand{\@oddfoot}{}% empty footer
     \renewcommand{\@evenfoot}{\@oddfoot}}

\setlength{\oddsidemargin}{0cm}
\setlength{\evensidemargin}{0cm}
\setlength{\textwidth}{170mm}
\setlength{\textheight}{230mm}

\newcommand{\rf}[1]{#1}
\newcommand{\rs}[1]{#1}

\begin{document}

\newtheorem{remark}{Remark}
\newtheorem{definition}{Definition}
\newtheorem{theorem}{Theorem}
\newtheorem{proposition}{Proposition}

\title{Well-balanced high-order finite difference methods for systems of balance laws}

\author{ Carlos Par\'es, Carlos Par\'es-Pulido\\
  University of M\'alaga (Spain), ETH Z\"urich (Switzerland)}
  
  \date{\today}
  
  \maketitle

\begin{abstract}
In this paper, high order well-balanced finite difference weighted
essentially non-oscillatory methods to solve general systems of balance laws are presented. Two different families are introduced: while the methods in the first one preserve every stationary solution, those in the second family only preserve a given set of stationary solutions that depend on some parameters. The accuracy, well-balancedness, and conservation properties of the methods are discussed, as well as their application to systems with singular source terms. The strategy is applied to derive third and fifth order well-balanced methods for a linear scalar balance law, Burgers' equation with a nonlinear source term, and for the shallow water model. In particular, numerical methods that preserve every stationary solution or only water at rest equilibria are derived for the latter.
\end{abstract}

\medskip\noindent
{\bf Keywords}: Systems of balance laws, high-order methods, well-balanced methods, finite difference methods, weighted essentially non-oscillatory methods, Shallow Water model. \\
\medskip\noindent
{\bf Acknowledgments.} This research  has been partially supported by the Spanish Government and FEDER through the Research project RTI2018-096064-B-C21.

\section{Introduction}

We consider 1d systems of balance laws of the form
\begin{equation}\label{sbl}
U_t + F(U)_x = S(U) H_x,
\end{equation}
where $U(x,t)$ takes value in $\Omega \subset \mathbb{R}^N$, $F: \Omega  \to \mathbb{R}^N$ is the flux function;
$S: \Omega \to \mathbb{R}^N$; and $H$ is a known function from $\mathbb{R} \to \mathbb{R}$ (possibly the identity function $H(x) = x$). The system is supposed to be hyperbolic, i.e. the Jacobian $J(U)$ of the flux function is assumed to have $N$ different real eigenvalues. 

Systems of the form \eqref{sbl} have non trivial stationary solutions that satisfy the ODE system:
\begin{equation}\label{sblst}
F(U)_x = S(U)H_x.
\end{equation}

PDE systems of this form appear in many fluid models in different contexts:
shallow water models, multiphase flow models, gas dynamic, elastic wave equations, etc.
\rs{In particular, the shallow water system corresponds to \eqref{sbl} with the choices:
\begin{equation}\label{sw}
U = \left[ \begin{array}{c} h \\ q \end{array} \right], \quad F(U) = \left[ \begin{array}{c} q \\\displaystyle \frac{q^2}{h} + \frac{g}{2}h^2 \end{array} \right], 
\quad S(U) = \left[ \begin{array}{c} 0 \\ gh \end{array} \right]. 
\end{equation}
The variable $x$ makes reference to the axis of the channel and $t$ is time; $q(x,t)$ and $h(x,t)$ represent the mass-flow and the thickness,
respectively; $g$, the acceleration due to gravity;  $H(x)$, the depth measured from a fixed level of reference;  $q(x,t)=h(x,t)u(x,t),$ with $u$ the depth averaged horizontal velocity. The eigenvalues of the Jacobian matrix $J(U)$ of the flux function $f(U)$ are the following:
$$\lambda_1=u-\sqrt{gh}, \quad \lambda_2=u+\sqrt{gh}.$$
The Froude number, given by 
\begin{equation}
Fr(U)= \displaystyle \frac{|u|}{gh},
\end{equation}
indicates the flow regime: subcritical ($Fr<1$), critical ($Fr=1$) or supercritical ($Fr>1$). }

\rs{The stationary solutions of this system are  implicitly given by 
\begin{equation}\label{sw-ss}
q = C_1, \quad \frac{1}{2}\frac{q^2}{h^2} + gh - g H = C_2,
\end{equation}
where $C_i$,  $i=1,2$ are arbitrary constants. In particular, water at rest equilibria are the 1-parameter family of stationary solutions corresponding to $C_1 = 0$, i.e.
\begin{equation}\label{sw-war}
q = 0, \quad h - H = \bar \eta,
\end{equation}
where $\bar \eta$ is a constant, corresponding to the vertical coordinate of the elevation of the unperturbed surface of the water.}

\rs{The objective of well balanced schemes is to preserve exactly or with enhanced accuracy some of the steady state solutions.   In the context of shallow water
equations Berm\'udez and V\'azquez-Cend\'on introduced in
\cite{BV94} the condition called {\it C-property}: a
scheme is said to satisfy this condition if it preserves the water at rest solutions \eqref{sw-war}. 
Since then, many different numerical methods that satisfy this property have been introduced in the literature: see \cite{Bouchut04}, \cite{Xing17} and their references. 
In the framework of finite difference methods,  high-order schemes that satisfy the C-property were introduced in 
 \cite{Caselles09} and  \cite{XingShu1}. The former were based  in a technique consisting on a formal transformation of \eqref{sbl} into a conservative system through the definition of a ‘combined flux’ formed by the  flux  $F$  and a primitive of the source term: see \cite{Gascon},  \cite{Donat10}. The latter relied on the expression of the source term as a function of variables that are constants for the stationary solutions to be preserved: see \cite{XingShu2}.} 

\rs{In \cite{CastroPardoPares} a first order finite volume method that preserves all the stationary solutions \eqref{sw-ss} was introduced. The method was based on a generalization of the Hydrostatic Reconstruction technique introduced in \cite{Audusse04} to preserve water at rest solutions. Since then, different first and high order finite volume and DG methods that preserve all the stationary solutions \eqref{sw-ss}
have been described in the literature: see \cite{Berthon}, \cite{BouchutMorales}, \cite{Cheng}, \cite{lopez2013}, \cite{Noelle06}, \cite{Noelle07},  \cite{Russo}, \cite{Xing14}, \dots Nevertheless,  to the best of our knowledge, high order finite difference methods with this enhanced well-balanced property have not been described so far.}

\rs{The design of high-order well-balanced numerical methods for variants of the shallow water model (with friction, Coriolis term, RIPA model, shallow water system in spherical coordinates, etc.) as well as to other systems of balance laws (Euler equations with gravity, models for the blood flow in vessels, etc.) is a very active front of research: see, for instance,
\cite{Berberich19},  \cite{SWesfera}, \cite{CastroPares2020},
\cite{Chandrashekar15}, \cite{Chandrashekar17}, \cite{Chertock}, \cite{Chertock18}, \cite{ChertockRIPA},
\cite{Gaburro17}, \cite{Gaburro18-2},
\cite{Grosheintz19}, 
\cite{Kappeli14}, \cite{Lukacova}, \cite{venas},  \cite{RIPA} ...}
 
\rs{ We focus here on the development of well-balanced high-order  finite difference methods for general systems of balance laws that preserve all the stationary solutions. 
The strategy is based on a simple idea:  let $U_i$ be the numerical approximation of the solution $U(x_i, t)$ at the node
 $x_i$ at time $t$ and let  $U^*_i$ be the stationary solution satisfying the Cauchy problem:
%\begin{equation}\label{mot1}
%\left\{
%\begin{array}{l}
%     & \displaystyle F(U^*_i)_x = S(U^*_i)H_x, \\
%      & \displaystyle U^*_i(x_i) = U_i.
% \end{array}
% \right.
% \end{equation}
 \begin{equation}\label{mot1}
 \begin{cases}
      & \displaystyle F(U^*_i)_x = S(U^*_i)H_x, \\[0.5em]
      & \displaystyle U^*_i(x_i) = U_i.
 \end{cases}
 \end{equation}
 Then, if $U^*_i$ can be found, one has trivially
 \begin{equation}\label{mot2}
 S(U_i) H_x (x_i)  = S(U^*_i(x_i)) H_x (x_i) = F(U^*_i(x_i))_x.
 \end{equation}
 Therefore, the source term can be numerically computed with high order accuracy by approaching the derivative of the function $F(U^*_i(x))$ at $x = x_i$ using a reconstruction operator. Of course, the main difficulty comes from the computation of the stationary solution $U^*_i$ at every node at every time step. As it will be shown, this technique can be easily adapted to the design of methods that preserve a prescribed set of stationary solutions: instead of solving the Cauchy problem \eqref{mot1},
 one stationary solution is chosen out of this set, whose value at $x_i$ is closest to $U_i$, in a sense to be determined. 
 The application of these techniques to the shallow water system will give numerical methods that preserve (a) all  the stationary solutions \eqref{sw-ss}, (b) the water at rest stationary solutions \eqref{sw-war} or (c) only the stationary solution corresponding to a particular choice of the constants $C_i$, $i = 1,2$, which is equivalent to fixing the mass-flow and the total energy of the equilibrium to be preserved.}
 
 This strategy has been inspired by the concept of well-balanced reconstruction introduced in  \cite{sinum2008} to develop well-balanced high-order finite volume numerical methods: see \cite{CastroPares2020} for a recent follow-up. In that case, \rs{if $U_i$ is the approximation of the cell-average  of the solution at the $i$th cell $I_i$ at time $t$, the stationary solution  $U^*_i$ whose cell-average is $U_i$ has to be found, i.e. the problem
 %\begin{equation}\label{fvmeth}
 %\left\{
 %\begin{array}{l}
 %     & \displaystyle F(U^*_i)_x = S(U^*_i)H_x, \\
 %     & \displaystyle \frac{1}{\Delta x}\int_{I_i} U^*_i(x) \, dx = U_i,
 %\end{array}
 %\right.
 %\end{equation}
  \begin{equation}\label{fvmeth}
 \begin{cases}
      & \displaystyle F(U^*_i)_x = S(U^*_i)H_x, \\[1em]
      & \displaystyle \frac{1}{\Delta x}\int_{I_i} U^*_i(x) \, dx = U_i,
 \end{cases}
 \end{equation}
 has to be solved, where $\Delta x$ is the space step. } From this point of view, the technique introduced here for finite difference methods is easier, since the local problems to be solved are standard Cauchy problems for ODE system \eqref{sblst}. In all the numerical tests considered here, explicit or implicit 
 expressions of the stationary solutions are available, which will allow us to find $U^*_i$ easily. For cases where this is not possible, a numerical method for ODE can be used to compute $U^*_i$ at the stencil of $x_i$ in the spirit of \cite{CastroGomezPares}. 
 
 \rs{However, in spite of their higher complexity, the family of high order finite volumes based on the solutions of problem \eqref{fvmeth} have in general better conservation properties than their finite difference counterparts introduced here. Although all the numerical methods introduced in this article have the property of reducing to a conservative method whenever the source term vanishes, they may fail to be conservative for the conservation laws included in the system, which is not the case for the finite volume methods based on \eqref{fvmeth}. In particular, in the case of the shallow water equations, the methods that preserve all stationary solutions do not preserve total mass; whereas those that preserve only water at rest solutions or a particular stationary solution do. This limitation is related, as it will be seen, to the way in which the right and left reconstructions of the flux are combined in finite difference methods to obtain a stable scheme: due to this, it remains a challenge to design finite difference numerical methods based on flux reconstructions that are essentially non-oscillatory high-order accurate and well-balanced for every stationary solution. In any case, the conservation errors are of the order of the methods and converge to 0 as $\Delta x \to 0$.}

 The case in which $H$ has jump discontinuities will be also considered. At a discontinuity of $H$, a solution $U$ is expected to be discontinuous too and the source term
$S(U)H_x$ cannot be defined within the distributional framework: it becomes a nonconservative product whose meaning has to be specified. There are different mathematical theories that allow one to give a sense to nonconservative products. In the theory developed in \cite{DalMaso95}, nonconservative products are interpreted as  Borel measures whose definition depends on the choice of a family of paths that, in principle, is arbitrary. As the Rankine-Hugoniot conditions, and thus the definition of weak solution, depend on the selected family of paths, its choice has to be consistent with the physics of the problem. Although for general nonconservative systems the adequate selection of paths may be difficult, in the case of systems of balance laws with singular source term there is a natural choice in which the paths are related to the stationary solutions of a regularized system: the interested reader is addressed to \cite{CastroPares2020} for a detailed discussion. \rs{In the particular case of the shallow water system, this choice implies that the admissible stationary weak solutions
still satisfy \eqref{sw-ss}, i.e. the constants $C_i$, $i=1,2$ corresponding to the mass-flux and the total energy cannot change across a discontinuity of $H$.} Although in general finite volume methods can be more easily adapted to deal with nonconservative products than finite difference schemes (see \cite{Pares06}), it will be shown that the numerical methods that preserve every stationary solution can be easily adapted to deal properly with singular source terms.

The organization of the article is as follows: finite difference high order methods based on a reconstruction operator are recalled in Section 2 where the particular example of WENO methods is highlighted. In Section 2 the case in which $H$ is continuous and a.e. differentiable \rf{and the eigenvalues of $J(U)$ do not vanish} is considered. First, the numerical methods that preserve any stationary solution are introduced, together with the proofs of their accuracy and their well-balanced property. Next, methods that preserve \rf{a prescribed set of stationary solutions} are introduced and the conservation property is discussed. The implementation of well-balanced WENO methods is also discussed. 
Section 3 is devoted to the \rs{extension of the method to more complex situations. First, resonant problems are discussed, i.e. situations in which one  of the eigenvalues of $J(U)$ vanishes. Then, a strategy to adapt the technique introduced here to problems whose stationary solutions are unknown or their computation is very costly is discussed. Next, the case in which $H$ is a.e. differentiable and piecewise continuous with isolated jump discontinuities is studied: the definition of the nonconservative product is briefly discussed and the numerical methods introduced in Section 2 are adapted to this case. Finally, the extension to multidimensional  problems is briefly discussed. Section 4 focuses on numerical experiments: the numerical methods are applied to the linear transport equation with linear source term, Burgers' equation with a nonlinear source term, and the shallow water equations. 
Finally, some conclusions are drawn and further developments are discussed.}

\section{Numerical methods}

\subsection{General case}
\rs{Uniform meshes of constant step $\Delta x$ and nodes $\{x_i\}$ will be considered here and the following notation will be also used for the intercells
$$
x_{i + 1/2} = x_i + \frac{\Delta x}{2},\quad \forall i.
$$
The numerical methods to be developed here are based on the high order finite difference conservative schemes  for systems of conservation laws
\begin{equation}\label{scl}
U_t + F(U)_x = 0
\end{equation}
introduced by  Shu and Osher in  \cite{Shu89}. In their approach, the discretization of the derivative of the flux relies on the equality
$$
F(U(x,t))_x = \frac{\widehat{F}(x + \Delta x/2) - \widehat{F}(x-\Delta x /2)}{\Delta x}
$$
that is exactly satisfied if $\widehat{F}(x)$ is a function such that
$$
F(U(x,t)) = \frac{1}{\Delta x}\int_{x-\Delta x/2}^{x + \Delta x/2}\widehat{F}(s) \,ds, \quad \forall x.
$$
Observe that the cell-averages of such a function  would be given by:
$$
\frac{1}{\Delta x}\int_{x_i-\Delta x/2}^{x_i + \Delta x/2}\widehat{F}(s) \,ds = F(U(x_i, t)),\quad \forall i,
$$
and thus a standard reconstruction operator can be used to obtain high-order  approximations  $\widehat{F}_{i+1/2}$ of $\widehat{F}(x_{i+1/2})$
 from the values of the cell-averages of $\widehat{F}$:
$$
\widehat{F}_{i+1/2} = \mathcal{R}(F(U(x_{i-r})), \dots,F(U(x_{i+s}))),
$$
where $\mathcal{S}_i = \{ x_{i-s}, \dots,  x_{i+r} \}$ is the stencil of the reconstruction operator.  ENO or WENO reconstructions are examples of such  operators: see \cite{JiangShu}, \cite{Shu88}, \cite{Shu98}.}

\rs{Once the reconstruction operator has been chosen, the semi-discrete numerical method writes then as follows:
\begin{equation}\label{sdmethscl-1}
\frac{d U_i}{dt} + \frac{1}{\Delta x}   \left( \widehat F_{i + 1/2} - \widehat F_{i-1/2} \right)  = 0,
\end{equation}
where
\begin{equation}\label{sdmethscl-1b}
\widehat{F}_{i+1/2} = \mathcal{R}(F(U_{i-r}), \dots,F(U_{i+s})).
\end{equation}
}  

\rs{A possible extension of these methods for systems of balance laws \eqref{sbl} is given by:
\begin{equation}\label{sdmeth}
\frac{d U_i}{dt} + \frac{1}{\Delta x}   \left( \widehat F_{i + 1/2} - \widehat F_{i-1/2} \right)  = S(U_i) H_x (x_i),
\end{equation}
but in general these methods are not well-balanced}.

\rs{The semidiscrete methods \eqref{sdmethscl-1} and \eqref{sdmeth} can be discretized in time by using TVD-RK methods: see \cite{Gottlieb98}}.

\subsection{WENO reconstructions}

In the particular case of the  WENO reconstruction of order $p = 2k + 1$, two flux reconstructions are computed using the values at the points $x_{i-k}, \dots, x_{i+k}$:
\begin{eqnarray}
\widehat F^L_{i+1/2} &  = &  \rs{\mathcal{R}}^L(F(U_{i-k}), \dots, F(U_{i+k})), \\
\widehat F^R_{i-1/2} & = &  \rs{\mathcal{R}}^R(F(U_{i-k}), \dots, F(U_{i+k})). 
\end{eqnarray}
\rs{$\mathcal{R}^L$ and $\mathcal{R}^R$ represent the so-called left and right-biased reconstructions, i.e. $\mathcal{R}^L$  approximates $F(U(x_{i + 1/2},t))$ on the left
and $\mathcal{R}^R$  approximates $F(U(x_{i - 1/2}, t))$ on the right. The  right-biased reconstructions can be computed with $\mathcal{R}^L$ by reflecting the arguments through  the intercell at which the numerical flux is computed.} 
Once these reconstructions have been computed, an upwind criterion can be chosen to define $ \widehat F_{i+1/2}$. For instance, for scalar problems
\begin{equation}\label{sblsc}
u_t + f(u)_x = s(u) H_x,
\end{equation}
an approximate value $a_{i+1/2}$ of $a(u) = f'(u)$ is chosen and the numerical flux is defined then as follows: 
$$
\widehat f_{i+1/2} = a^+_{i+1/2}  \widehat f^L_{i+1/2} + a^-_{i+1/2}  \widehat f^R_{i+1/2} ,
$$
where $ \widehat f^L_{i+1/2}$, $\widehat f^R_{i+1/2}$,  $\widehat f_{i+1/2}$ represent respectively the left-biased,  the right-biased, and the final WENO reconstruction of the flux at $x_{i+1/2}$,  and
$$a^\pm = \frac{1}{2}(a \pm |a|).
$$
For systems, a matrix $A_{i+1/2}$ with $N$ real different eigenvalues
$$
\lambda_{i+1/2,1}, \dots, \lambda_{i+1/2, N}
$$
that approximates the Jacobian $J(U)$ of the flux function has to be chosen and
then the numerical flux can be defined by
\begin{equation}\label{upwind}
\widehat F_{i+1/2} = P^+_{i+1/2} \widehat F^L_{i+1/2} +   P^-_{i+1/2} \widehat F^R_{i+1/2},
\end{equation}
where
\begin{equation}\label{projectors}
P^\pm_{i+1/2} = K_{i+1/2} D^\pm_{i+1/2}  K^{-1}_{i+1/2}.
\end{equation}
Here $D^\pm_{i+1/2}$ is the diagonal matrix whose coefficients are
$$
 \frac{1}{2}\left(1 \pm \textrm{sign}(\lambda_{i+1/2, j})\right), \quad j = 1, \dots, N,
 $$
 and $K_{i+1/2}$ is a matrix whose columns are eigenvectors.

An alternative approach is to split the flux 
$$
F(U) = F^+(U) + F^-(U)
$$
in such a way that the eigenvalues of the Jacobian $J^+(U)$ (resp. $J^-(U)$) of $F^+(U)$ (resp. $F^-(U)$) are positive (resp. negative). Then, the reconstruction operator is applied to $F^\pm$:
\begin{eqnarray}
\widehat F^{+}_{i+1/2} &  = &  \rs{\mathcal{R}}^L(F^+(U_{i-k}), \dots, F^+(U_{i+k})), \\
\widehat F^{-}_{i-1/2} & = &  \rs{\mathcal{R}}^R(F^-(U_{i-k}), \dots, F^-(U_{i+k})), 
\end{eqnarray}
and finally,
\begin{equation}\label{split}
\widehat F_{i+1/2} =  \widehat F^{+}_{i+1/2} + \widehat F^{-}_{i+1/2}.
\end{equation}
A standard choice is the Lax-Friedrichs flux-splitting:
$$
F^\pm(U) = \frac{1}{2} \left( F(U) \pm \alpha U \right),
$$
where $\alpha$ is the local (WENO-LLF) or global (WENO-LF)  maximum of the absolute value of the eigenvalues of $\{J(U_i)\}$:  see \cite{JiangShu}, \cite{Shu98}.

In both cases (the upwind or the splitting implementations) the values used to compute $\widehat F_{i+1/2}$ are those at the points
$$x_{i-k}, \dots, x_{i+ k +1}$$
so that, in the general notation,  $r = k$ and $s = k+1$.

\section{Well-balanced high-order finite difference methods}\label{ss:wbhofdm}
In order to tackle the difficulties gradually, let us suppose first that $H$ is continuous and a.e. differentiable \rf{and that 
the eigenvalues $J(U)$ are different from 0 for all $U$.}

\subsection{Definition of the method: general case}\label{ss:wbmeth}
\rs{As it was mentioned in the Introduction, the  idea is to write the source term as the derivative of $F(U^*_i(x))$ at $x_i$ using \eqref{mot2},  where $U^*_i$ 
is the solution of the Cauchy problem \eqref{mot1}, and to apply then the reconstruction operator to
the differences $\{ F(U_j) - F(U^*_i(x_j) \}$ to obtain, at the same time, high-order approximation of the flux and the source term.}

The following semi-discrete numerical method  is thus proposed:
\begin{equation}\label{wbsdmeth}
\frac{d U_i}{dt} + \frac{1}{\Delta x}   \left( \widehat{\mathcal{F}}_{i,i + 1/2} - \widehat{\mathcal{F}}_{i,i-1/2} \right)  =  0,
\end{equation}
where the \lq\lq numerical fluxes\rq\rq $\widehat{\mathcal{F}}_{i, i\pm 1/2}$ are computed as follows:
\begin{enumerate}

\item Look for the \rs{solution} $U^*_i (x)$ of the Cauchy problem \eqref{mot1}.

\item Define
    $$ \mathcal{F}_j = F(U_j) - F(U^*_i(x_j)), \quad j = i-1-r,\dots, i+s$$

\item  Compute
\begin{eqnarray*}
\widehat{\mathcal{F}}_{i,i+1/2} & =&  \rs{\mathcal{R}}(\mathcal{F}_{i-r}, \dots, \mathcal{F}_{i+s}), \\
\widehat{\mathcal{F}}_{i,i-1/2} & = & \rs{\mathcal{R}}(\mathcal{F}_{i-1-r}, \dots, \mathcal{F}_{i-1+s}).
\end{eqnarray*}

\end{enumerate}

\begin{remark} 
In the notation $\widehat{\mathcal{F}}_{i,i+1/2}$ the index $i +1/2$ corresponds to the intercell and the index $i$ to the center of the cell where the initial condition of \eqref{Cauchy} is imposed. Therefore, in general
\begin{equation}\label{nocons}
\widehat{\mathcal{F}}_{i,i+1/2} \not= \widehat{\mathcal{F}}_{i+1,i+1/2}
\end{equation}
as one can expect due to the non conservative nature of the system of equations. Notice that two reconstructions have to be computed at every stencil $\mathcal{S}_i$: $\widehat{\mathcal{F}}_{i,i+1/2}$ and $\widehat{\mathcal{F}}_{i+1,i+1/2}$.
\end{remark}

The following result holds:
\begin{proposition} If the numerical method \eqref{sdmeth} is well-defined, \rs{the reconstruction operator $\mathcal{R}$ has order of accuracy $k$, and} the stationary solutions of \eqref{sbl} are smooth, then \rs{the numerical method \eqref{wbsdmeth} has also  order of accuracy $k$}. 
\end{proposition}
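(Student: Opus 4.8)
The plan is a local truncation-error computation that reduces the accuracy of \eqref{wbsdmeth} to the already-available accuracy of the reconstruction operator $\mathcal{R}$, applied to a single smooth function. Fix a node $x_i$ and a time $t$, let $U$ be a smooth exact solution of \eqref{sbl}, and evaluate \eqref{wbsdmeth} along it, i.e.\ replace every numerical unknown $U_j$ by $U(x_j,t)$. Then $U_i = U(x_i,t)$, so the Cauchy problem \eqref{mot1} producing $U^*_i$ has datum $U^*_i(x_i) = U(x_i,t)$, and the quantities assembled in Step~2 of the definition are exactly the nodal values
$$
\mathcal{F}_j = F(U(x_j,t)) - F(U^*_i(x_j)) = g_i(x_j), \qquad g_i(x) := F(U(x,t)) - F(U^*_i(x)),
$$
of one smooth function $g_i$ — smooth because $U(\cdot,t)$ and, by hypothesis, the stationary solutions of \eqref{sbl} are smooth — which moreover vanishes at $x_i$. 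The structural point I would stress is that both $\widehat{\mathcal{F}}_{i,i+1/2}$ and $\widehat{\mathcal{F}}_{i,i-1/2}$ in \eqref{wbsdmeth} are $\mathcal{R}$ applied to nodal values of the \emph{same} $g_i$, on the stencils $\mathcal{S}_i$ and $\mathcal{S}_{i-1}$ respectively.

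Next I would invoke the defining order-$k$ property of $\mathcal{R}$. Interpreting $\{g_i(x_j)\}_j$ as the cell averages of the sliding-average primitive of $g_i$, exactly as $\widehat{F}$ is attached to $F(U(\cdot,t))$ in \eqref{sdmethscl-1}--\eqref{sdmethscl-1b}, the hypothesis that $\mathcal{R}$ is accurate of order $k$ — i.e.\ that the conservative scheme \eqref{sdmethscl-1} it generates reproduces the derivative of a smooth flux to order $k$ — gives
$$
\frac{1}{\Delta x}\Big(\widehat{\mathcal{F}}_{i,i+1/2} - \widehat{\mathcal{F}}_{i,i-1/2}\Big) = g_i'(x_i) + O(\Delta x^k).
$$
No order is lost in dividing by $\Delta x$ precisely because both reconstructions use the same $g_i$: their leading $O(\Delta x^k)$ reconstruction errors at $x_{i+1/2}$ and $x_{i-1/2}$ carry a common smooth leading coefficient and cancel in the difference — the same mechanism that makes \eqref{sdmethscl-1} $k$-th order, transplanted from $F(U(\cdot,t))$ to $g_i$. (Using two different Cauchy solutions on the two sides would destroy this cancellation, which is exactly the origin of the non-conservation noted in the preceding Remark.) Finally, $g_i'(x_i) = F(U(x_i,t))_x - F(U^*_i(x_i))_x$, and since $U^*_i$ solves \eqref{sblst} the second term equals $S(U^*_i(x_i))H_x(x_i) = S(U(x_i,t))H_x(x_i)$ by \eqref{mot2}; hence the right-hand side above is $F(U(x_i,t))_x - S(U(x_i,t))H_x(x_i) + O(\Delta x^k) = -U_t(x_i,t) + O(\Delta x^k)$ because $U$ solves \eqref{sbl}. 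Thus the residual obtained by inserting the exact solution into \eqref{wbsdmeth} is $O(\Delta x^k)$, so the semidiscrete method is $k$-th order accurate; combined in the standard way with stability and the smooth dependence of $U^*_i$ on its datum $U_i$, this yields a globally $k$-th order scheme, and a matching TVD--RK time stepping preserves the order of the fully discrete one.

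The step that needs genuine care, rather than bookkeeping, is the second displayed estimate when $\mathcal{R}$ is a WENO operator, which is nonlinear: one must check that order $k$ survives even though $\mathcal{R}$ is fed the $O(\Delta x)$-small arguments $\mathcal{F}_j$. This holds because the $\mathcal{F}_j$ are nodal values of the fixed smooth $g_i$, so the usual smooth-data analysis of WENO (convergence of the nonlinear weights to the optimal linear ones at the rate needed for order $k$) applies to $g_i$ just as to any smooth flux, the smallness of the data being irrelevant to the relative error; if one prefers, for the underlying \emph{linear} reconstruction the estimate is immediate, since $\widehat{\mathcal{F}}_{i,i\pm1/2}$ then splits into the base reconstruction of $\{F(U(x_j,t))\}$ minus that of $\{F(U^*_i(x_j))\}$, to each of which the order-$k$ accuracy of \eqref{sdmethscl-1} applies (once along $U$, once along the smooth $U^*_i$). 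A minor further point is that ``\eqref{sdmeth} is well-defined'' should be read as also granting that $H$ is differentiable at the nodes and that the Cauchy problems \eqref{mot1} are solvable on $\mathcal{S}_i\cup\mathcal{S}_{i-1}$, which is automatic near a smooth stationary background by elementary ODE theory.
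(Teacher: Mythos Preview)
Your argument is correct and follows essentially the same route as the paper: introduce the auxiliary function $g_i(x)=F(U(x,t))-F(U^*_i(x))$, invoke the order-$k$ accuracy of the reconstruction on its nodal values to get $\frac{1}{\Delta x}(\widehat{\mathcal{F}}_{i,i+1/2}-\widehat{\mathcal{F}}_{i,i-1/2})=g_i'(x_i)+O(\Delta x^k)$, and then use \eqref{sblst}, \eqref{mot2} and the PDE to identify $g_i'(x_i)$ with $-U_t(x_i,t)$. Your additional remarks on the nonlinearity of WENO with $O(\Delta x)$-small inputs and on the linear-reconstruction splitting go somewhat beyond what the paper spells out, but they are sound and do not alter the approach.
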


\begin{proof}
Let $U(x,t)$ be a smooth solution of \eqref{sbl}. Given a time $t$ and an index $i$,  the reconstruction procedure is applied to 
$\{F(U(x_j, t)) - F(U^{*,t}_i(x_j)) \}_{j=i-r}^{i+s+1}$ to obtain $\widehat{\mathcal{F}}_{i, i\pm1/2}$, where $U^{*,t}_i$
represents the solution of \eqref{sblst} that satisfies
\begin{equation}\label{icCauchyt}
U^{*,t}_i(x_i)=U(x_i, t).
\end{equation}

 One has:
\begin{eqnarray*}
& & \partial_t U(x_i, t) + \frac{1}{\Delta x} \left( \widehat{\mathcal{F}}_{i,i + 1/2} - \widehat{\mathcal{F}}_{i,i-1/2} \right) \\
& & \qquad =  \partial_t U(x_i, t)+ \partial_x F(U)(x_i, t) -\partial_x  F(U_i^{*,t})(x_i) + O(\Delta x^k) \\
& & \qquad =  \partial_t U(x_i, t)+ \partial_x F(U)(x_i, t)   - S(U^{*,t}_i(x_i))\partial_x H (x_i)  + O(\Delta x^k) \\
& & \qquad =  \partial_t U(x_i, t)+ \partial_x F(U)(x_i, t)   - S(U(x_i, t))\partial_x H (x_i)  + O(\Delta x^k) \\
& & \qquad = O(\Delta x^k),
\end{eqnarray*}
where the facts that $U$ is a solution of \eqref{sbl} and $U^{*,t}_i$  a stationary solution satisfying \eqref{icCauchyt} have been used. 
\end{proof}

Observe that the method is well-defined if the first step of the reconstruction procedure can be always performed, i.e. if  for every $i$
the Cauchy problem
\begin{equation}\label{Cauchy}
\left\{
\begin{array}{l}
\displaystyle \frac{d\ }{dx} F(U) = S(U)H_x,  \smallskip\\
U(x_i) = U_i,
\end{array}
\right.
\end{equation}
 has a unique solution whose interval of definition contains the extended stencil  $\widehat{\mathcal{S}}_i = \{ x_{i-r-1}, \dots, x_{i+s}\}$. 
 
\rf{Since the eigenvalues of $J(U)$ are assumed  to be different from 0, \eqref{Cauchy} is equivalent to
  \begin{equation}\label{Cauchy2}
\left\{
\begin{array}{l}
\displaystyle \frac{d U }{dx} = J(U)^{-1} S(U)H_x,  \smallskip\\
U(x_i) = U_i,
\end{array}
\right.
\end{equation}
and,  under the adequate smoothness assumptions, this Cauchy problem has  a unique maximal solution  $U^*_i$ defined in an interval $(\alpha, \beta)$.
In this case, only two things can happen:
\begin{itemize}
    \item If $\widehat{\mathcal{S}}_i \subset (\alpha, \beta)$ then \eqref{wbsdmeth} can be used to update $U_i$ provided that $U^*_i$  can be computed.
    \item If $x_{i-r-1} < \alpha$ or $\beta < x_{i+s}$  then $U_j$, $j = i-r-1, \dots, i+s$ cannot be the values of a stationary solution $U^*$ at the points of the stencil (otherwise, $U^*$ would be a solution of \eqref{Cauchy2} defined in an interval bigger than $(\alpha, \beta)$). Therefore, in this case there is no need of a well-balanced method and \eqref{sdmeth} can be used to update $U_i$.
\end{itemize}
}

\subsection{Well-balanced property}

\rs{Let us suppose that the reconstruction operator satisfies
$$
 0 = \mathcal{R}(0, \dots, 0).
 $$
Then, the numerical method \eqref{wbsdmeth}  is well-balanced in the sense given by the following
\begin{proposition}\label{prop:wb}
Given a stationary solution $U^*$ of \eqref{sbl}, the vector of its point values $\{ U^*(x_i) \}$ is an equilibrium of the ODE system given by the semi-discrete method \eqref{wbsdmeth}.
\end{proposition}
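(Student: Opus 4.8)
The plan is to show that if the nodal values are initialized with $U_i = U^*(x_i)$ for a fixed stationary solution $U^*$ of \eqref{sbl}, then every flux difference $\widehat{\mathcal{F}}_{i,i+1/2} - \widehat{\mathcal{F}}_{i,i-1/2}$ on the right-hand side of \eqref{wbsdmeth} vanishes, so that $dU_i/dt = 0$ for all $i$.

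First I would fix an index $i$ and look at the local Cauchy problem \eqref{mot1} solved in step 1 of the method at the node $x_i$. Since $U_i = U^*(x_i)$, the global stationary solution $U^*$ is itself a solution of \eqref{mot1}. Because the eigenvalues of $J(U)$ are assumed to be nonzero, \eqref{mot1} is equivalent to the explicit ODE \eqref{Cauchy2}, which under the standing smoothness hypotheses has a unique maximal solution; moreover $U^*$, being defined on the whole computational domain, is in particular defined on the extended stencil $\widehat{\mathcal{S}}_i$. Hence, by uniqueness, the function $U^*_i$ produced in step 1 coincides with $U^*$ on $\widehat{\mathcal{S}}_i$; in particular $U^*_i(x_j) = U^*(x_j) = U_j$ for every $j = i-1-r,\dots,i+s$. (This also shows the method is well-defined at these particular values, since it is exactly the first of the two cases listed after \eqref{Cauchy}.) Step 2 then gives $\mathcal{F}_j = F(U_j) - F(U^*_i(x_j)) = F(U^*(x_j)) - F(U^*(x_j)) = 0$ for all $j$ in the extended stencil. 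Applying the reconstruction operator in step 3 and using the hypothesis $0 = \mathcal{R}(0,\dots,0)$ yields $\widehat{\mathcal{F}}_{i,i+1/2} = \widehat{\mathcal{F}}_{i,i-1/2} = 0$, and substituting into \eqref{wbsdmeth} gives $dU_i/dt = 0$. As $i$ was arbitrary, $\{U^*(x_i)\}$ is an equilibrium of the semi-discrete system.

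The only delicate point is the identification $U^*_i \equiv U^*$ on the stencil: it rests on the equivalence of \eqref{mot1} with the explicitly solvable form \eqref{Cauchy2} (guaranteed by the non-vanishing of the eigenvalues of $J(U)$) together with the fact that the maximal interval of existence of the solution through $(x_i, U^*(x_i))$ contains $\widehat{\mathcal{S}}_i$, which is automatic here precisely because $U^*$ is assumed to be a stationary solution defined on the whole domain. Everything else is bookkeeping with the reconstruction operator and the hypothesis that it maps the zero data to zero.
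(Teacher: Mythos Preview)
Your proof is correct and follows essentially the same approach as the paper: identify $U^*_i \equiv U^*$ via uniqueness of the Cauchy problem \eqref{Cauchy2}, deduce that all $\mathcal{F}_j$ vanish, and invoke $\mathcal{R}(0,\dots,0)=0$ to conclude the flux difference is zero. Your version is simply more explicit about the uniqueness argument and the well-definedness of the method at the stationary data, points the paper leaves implicit.
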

}
\begin{proof}
\rs{Observe that, when the algorithm proposed in Section \ref{ss:wbmeth} is  applied to $\{U^*(x_i)\}$ to compute the numerical fluxes $\mathcal{F}_{i,i\pm 1/2}$, at the first stage one has
$$
U^{*}_i \equiv U^*, 
$$
since $U^*$ solves \eqref{Cauchy2}, 
and thus
$$
F(U^*(x_j)) - F(U^{*}_i(x_j)) = 0, \quad  j= i-1 -r,\dots, i+s.
$$
Therefore,
\[
\widehat{\mathcal{F}}_{i,i + 1/2} - \widehat{\mathcal{F}}_{i,i-1/2} = 0,
\]
as we wanted to prove.}

\end{proof}

\subsection{Numerical method with WENO reconstructions}
Let us discuss the implementation of the numerical method \eqref{wbsdmeth} in the particular case of WENO reconstructions with the upwind or the flux-splitting approach.

\subsubsection{Upwind approach}

The implementation in this case is as follows: once the solution $U^*_i$ has been computed:
\begin{itemize}
    \item Define
    $$ \mathcal{F}_j = F(U_j) - F(U^*_i(x_j)), \quad j = i-k-1,\dots, i+k+1$$    
    
    \item  Compute
    \begin{eqnarray*}
    \widehat{\mathcal{F}}^L_{i,i+1/2} & = & \rs{\mathcal{R}}^L(\mathcal{F}_{i-k}, \dots, \mathcal{F}_{i+k}), \\
    \widehat{\mathcal{F}}^R_{i,i-1/2}  & = & \rs{\mathcal{R}}^R(\mathcal{F}_{i-k}, \dots, \mathcal{F}_{i+k}), \\
    \widehat{\mathcal{F}}^R_{i,i+1/2} & = & \rs{\mathcal{R}}^R(\mathcal{F}_{i-k+1}, \dots, \mathcal{F}_{i+k+1}),\\
    \widehat{\mathcal{F}}^L_{i,i-1/2} & = & \rs{\mathcal{R}}^L(\mathcal{F}_{i-k-1}, \dots, \mathcal{F}_{i+k-1}).
    \end{eqnarray*}

\item Choose intermediate matrices $A_{i\pm 1/2}$.

\item Define
\begin{eqnarray*}
\widehat{\mathcal{F}}_{i, i+1/2} & = &  P^+_{i+1/2} \widehat{\mathcal{F}}^L_{i, i+1/2} + P^-_{i+1/2} \widehat{\mathcal{F}}^R_{i,i+1/2},\\
\widehat{\mathcal{F}}_{i, i-1/2} & = & P^+_{i-1/2}  \widehat{\mathcal{F}}^L_{i, i-1/2} + P^-_{i-1/2} \widehat{\mathcal{F}}^R_{i,i-1/2},
\end{eqnarray*}
where the projection matrices $P^\pm_{i \pm 1/2}$ are given by \eqref{projectors}.

\end{itemize}

\subsubsection{Flux-splitting approach}
The implementation of WENO with splitting approach will be as follows: once the solution $U^*_i$ has been computed:
\begin{itemize}
    \item Define
    \begin{eqnarray*}
    \mathcal{F}^+_j = F^+(U_j) - F^+(U^*_i(x_j)), \quad j = i-k-1,\dots, i+k\\
    \mathcal{F}^-_j = F^-(U_j) - F^-(U^*_i(x_j)), \quad j = i-k,\dots, i+k+1
    \end{eqnarray*}
    
    \item  Compute
    \begin{eqnarray*}
    \widehat{\mathcal{F}}^+_{i,i+1/2} & = & \rs{\mathcal{R}}^L(\mathcal{F}^+_{i-k}, \dots, \mathcal{F}^+_{i+k}), \\
    \widehat{\mathcal{F}}^-_{i,i-1/2}  & = & \rs{\mathcal{R}}^R(\mathcal{F}^-_{i-k}, \dots, \mathcal{F}^-_{i+k}), \\
    \widehat{\mathcal{F}}^-_{i,i+1/2} & = & \rs{\mathcal{R}}^R(\mathcal{F}^-_{i-k+1}, \dots, \mathcal{F}^-_{i+k+1}),\\
    \widehat{\mathcal{F}}^+_{i,i-1/2} & = & \rs{\mathcal{R}}^L(\mathcal{F}^+_{i-k-1}, \dots, \mathcal{F}^+_{i+k-1}).
    \end{eqnarray*}

\item Define
$$
\widehat{\mathcal{F}}_{i, i\pm 1/2}  =   \widehat{\mathcal{F}}^-_{i, i\pm 1/2} + \widehat{\mathcal{F}}^+_{i,i\pm1/2}.
$$

\end{itemize}

In the particular case of the Lax-Friedrichs splitting, the reconstruction operators $\mathcal{F}^L$ and
$\mathcal{F}^R$ will be applied to the values
$$
F(U_j) - F(U^*_i(x_j)) \pm \alpha (U_j - U^*_i(x_j))
$$
in the corresponding stencil. Again $\alpha$ is the local or global maximum eigenvalue of $\{J(U_i)\}$: although the numerical viscosity \rf{can be small when $U_j$  is close to $U_i^*(x_j)$}, the numerical method has been shown to be stable under a CFL number of 1/2 in all the test cases considered in Section \ref{ss:tests}.

\begin{remark}
Observe that, while for a conservative system 2 reconstructions are computed at every stencil $x_{i-k}, \dots, x_{i+k}$, 4 reconstructions have to be computed now: 2 using $U^*_i$, 1 using $U^*_{i-1}$,  1 using $U^*_{i+1}$.
\end{remark}

\subsection{Numerical methods that preserve \rf{a family of stationary solutions}}\label{ss:wb1}

\rf{The strategy described in Section \ref{ss:wbhofdm}  can be easily adapted to obtain schemes that only preserve a prescribed set of stationary  solutions: this would be the case if, for instance, one is interested in the design of numerical methods for the shallow water model that only preserve the water-at-rest solutions \eqref{sw-war}.  If, as in this example, the set to be preserved is a $k$-parameter family of stationary solutions,
$$
U^*(x; C_1,\dots, C_k),
$$
with $k < N$, where $N$ is the number of  unknowns, the following numerical method is proposed:}

\rf{
\begin{equation}\label{wbfam}
\frac{d U_i}{dt} + \frac{1}{\Delta x}   \left( \widehat{\mathcal{F}}_{i,i + 1/2} - \widehat{\mathcal{F}}_{i,i-1/2} \right)  =  (S(U_i)- S(U_i^*(x_i)))H_x(x_i),
\end{equation}
where  $\widehat{\mathcal{F}}_{i,i\pm 1/2}$ and $U^*_i$ are computed as follows:}

\begin{enumerate}

\item \rf{ Find $C^i_1, \dots, C^i_k$ such that:
\begin{equation}\label{firststagek}
 u_{j_l}^* (x_i; C^i_1, \dots, C^i_k)  =  u_{i, j_l}, \quad l=1, \dots, k,
\end{equation}
where  $u^*_{j}$, $ u_{i,j}$ denote respectively the $j$th component of $U^*$ and $U_i$ and $\{ j_1, \dots, j_k\}$ is a set of $k$ indices that is
predetermined in order to have the same number of unknowns and equations in \eqref{firststagek}.
Then, define:
$$
U^*_i(x) = U^*(x;C^i_1, \dots, C^i_k).
$$
}

\item \rf{Define
    $$ \mathcal{F}_j = F(U_j) - F(U^*_i(x_j)), \quad j = i-1-r,\dots, i+s$$    
}

\item  \rf{Compute
\begin{eqnarray*}
\widehat{\mathcal{F}}_{i,i+1/2} & =&  {\mathcal{R}}(\mathcal{F}_{i-r}, \dots, \mathcal{F}_{i+s}), \\
\widehat{\mathcal{F}}_{i,i-1/2} & = & {\mathcal{R}}(\mathcal{F}_{i-1-r}, \dots, \mathcal{F}_{i-1+s}).
\end{eqnarray*}
}

\end{enumerate}

Observe that, if $k=N$, \eqref{firststagek} is equivalent to solving the Cauchy problem.

\rf{It can be easily shown that, if the numerical method is well-defined (i.e. if the equation \eqref{firststagek} has a unique solution for every $i$) and the 
stationary solutions of the family are smooth, then the numerical method has the order of accuracy of the reconstruction operator and it preserves all the stationary solutions of the family.}

\rf{Let us apply this methodology to derive a family of numerical methods that preserve the water-at-rest solutions of the shallow water model. In this  case, the family of stationary solutions to be preserved is given by:
\begin{equation}
    h^*(x; \eta^*) = \eta^* + H(x), \quad q^*(x) = 0,
\end{equation}
where $\eta^*$ is an arbitrary constant corresponding to the elevation of the undisturbed water surface. Given a state $U_i = [h_i, q_i]^T$, in the first stage of
the algorithm that computes the numerical fluxes, we select the solution of this family that satisfies:
\[
h^*(x_i, \eta^*) = h_i,
\]
i.e. the first index is selected to fix the constant. We therefore aim to preserve the stationary solution with flat water surface at height $\eta^* = H(x_i) + \eta_i$ everywhere. 
The selected stationary solution of the family is thus
\[
U_i^*(x) = \left[ \begin{array}{c} \eta_i + H(x) \\ 0 \end{array} \right],
\]
so that the numerical fluxes $\widehat{F}_{i,i\pm 1/2}$ are computed by applying the reconstruction operator to:
\[
\mathcal{F}_j = F(U_j) - F(U_i^*(x_j)) = \left[ \begin{array}{c}  q_j \\ \displaystyle \frac{q_j^2}{h} + \frac{g}{2}h_j^2  -  \frac{g}{2} h^*_i(x_j) \end{array}\right]
= \left[ \begin{array}{c}  q_j \\ \displaystyle \frac{q_j^2}{h_j} + \frac{g}{2}(\eta_j^2  - \eta_i^2) + g(\eta_j - \eta_i)H(x_j) \end{array}\right],
\]
where $\eta_j = h_j - H(x_j)$. On the other hand:
\[
 S(U_i)- S(U_i^*(x_i)) = 0,
 \]
 so that the numerical method writes as follows:
\begin{equation}\label{wbswar}
\frac{d U_i}{dt} + \frac{1}{\Delta x}   \left( \widehat{\mathcal{F}}_{i,i + 1/2} - \widehat{\mathcal{F}}_{i,i-1/2} \right)  =  0.
\end{equation}
Notice that the numerical method obtained can be interpreted as a discretization of the following equivalent formulation of the shallow water system:
\[
\left\{
\begin{array}{l}
\displaystyle  \eta_t + q_x = 0, \\
\displaystyle q_t + \left( \frac{q^2}{\eta + H} + \frac{g}{2}\eta^2 + g\eta H \right)_x  =  g \eta H_x.
\end{array}
\right.
\]
}

\rf{Coming back to the general case, let us remark that, in the case in which the family of stationary solutions has only one element,  i.e. if there is only one stationary solution $U^*(x)$ to preserve,  the previous algorithm can be easily adapted: \eqref{firststagek} is skipped and $U^*_i \equiv U^*$ is selected in the first step. In this case the expression of the numerical method is as follows:
\begin{equation}\label{wb1s}
\frac{d U_i}{dt} + \frac{1}{\Delta x}   \left( \widehat{\mathcal{F}}_{i + 1/2} - \widehat{\mathcal{F}}_{i-1/2} \right)  =  (S(U_i)- S(U^*(x_i)))H_x(x_i).
\end{equation}
Note that unlike in eq. \eqref{wbfam}, here we have $\widehat{\mathcal{F}}_{i + 1/2}$ instead of $\widehat{\mathcal{F}}_{i,i + 1/2}$ since the steady state solution to use in the reconstruction does not vary with cell $i$. More precisely, $\widehat{\mathcal{F}}_{i\pm 1/2}$ and $U^*_i$ are computed using the algorithm:}

\begin{enumerate}

\item \rf{Define
    $$ \mathcal{F}_j = F(U_j) - F(U^*(x_j)), \quad j = i-1-r,\dots, i+s.$$    
}

\item  \rf{Compute
\begin{eqnarray*}
\widehat{\mathcal{F}}_{i+1/2} & =&  {\mathcal{R}}(\mathcal{F}_{i-r}, \dots, \mathcal{F}_{i+s}), \\
\widehat{\mathcal{F}}_{i-1/2} & = & {\mathcal{R}}(\mathcal{F}_{i-1-r}, \dots, \mathcal{F}_{i-1+s}).
\end{eqnarray*}
}

\end{enumerate}

\rf{Notice that, in this particular case, the numerical fluxes only depend on $i+1/2$. Furthermore, observe that the numerical method can be also derived as follows: subtract from \eqref{sbl} the equation satisfied by the stationary solution $U^*$ to obtain:
 \[  U_t+(F(U)-F(U^*))_x= (S(U)-S(U^*)) H_x \]
 and apply a reconstruction operator to compute the derivative of the 'flux' function. This strategy  is well-known and has been applied in many  fields, like in atmospheric sciences, where $U^*$ represents the background gravitational effect.  Although it is not new, we mention this example to put it in context in the more general framework introduced here.}

 \rf{Before finishing this discussion, let us mention another case in which a numerical method that preserves a family of stationary solutions is useful. Let us consider now the Euler equations of gas dynamics  with  source term for the simulation of the flow of a gas in a linear gravitational field:
\begin{equation}\label{Euler}
\left\{
\begin{array}{l}
\rho_t + (\rho u)_x = 0, \\[0.2cm]
(\rho u)_t + (\rho u^2 + p)_x=  - g\rho , \\[0.2cm]
(E)_t + (u(E+p))_x =  -g \rho u.
\end{array}
\right.
\end{equation}
Here, $\rho \geq 0$ is the density, $u$ the velocity,  $ m=\rho u$ the momentum, $p \geq 0$ the pressure,  $E$ the total energy per unit volume, and $H(x)$ the gravitational potential. Furthermore, the internal energy $e$ is given by $\rho e=E- \frac{1}{2} \rho u^2$. Pressure is determined from $e$ through the equation of state (EOS). Here we suppose for simplicity an ideal gas, therefore
\[
p=(\gamma -1)\rho e,
\]
where $\gamma>1$ is the adiabatic constant.  } 

\rf{System \eqref{Euler} can be written in the form \eqref{sbl} with $H(x) = - gx$,
\[
U=\left[ \rho, \, \rho u, \, E\right]^T, \quad F(U)=\left[ \rho u, \, \rho u^2 + p, \, u (E+p)\right]^T, \quad S(U)=\left[0,\, -\rho, -\rho u\right]^T.
\]
The  hydrostatic equilibrium solutions Euler equations with gravity satisfy
\[
u(x,t)=0   \quad p_x=-\rho H_x.
\]
A family of isothermal stationary solutions depending on two positive parameters $C_1$, $C_2$ is given by
\begin{equation}\label{termalH}
\rho^*(x)=C_1 e^{-gx} , \quad p^*(x)=C_1 e^{-gx} + C_2, \quad u^*(x) = 0, \quad E^*(x) = \frac{p^*(x)}{\gamma -1}.
\end{equation}
}

\rf{High-order numerical methods that preserve this family of stationary solutions can be derived following the strategy proposed here: given the state $U_i = [\rho_i, \rho_i u_i, E_i]^ T$, first we look for
the stationary solution of the family such that
\[ \rho^*(x_i) = \rho_i, \quad p^*(x_i) = p_i,\]
which is
\begin{equation}\label{termalHi}
\rho_i^*(x)= \rho_i e^{-g(x - x_i)} , \quad p_i^*(x)= p_i - \rho_i + \rho_i e^{-g(x - x_i)}, \quad u_i^*(x) = 0, \quad E_i^*(x) = \frac{p_i^*(x)}{\gamma -1}.
\end{equation}
In this case,  one has: 
\[
S(U_i) - S(U^*(x_i)) = \left[ \begin{array}{c} 0 \\ 0 \\ -\rho_i u_i \end{array} \right].
\]
Taking into account the expression of $p^*_i$, it can be easily checked that the numerical method \eqref{wbfam} can be equivalently written  in flux-source term form as follows:
\begin{equation}\label{sdmethEuler1}
\frac{d U_i}{dt} + \frac{1}{\Delta x}   \left( \widehat{F}_{i + 1/2} - \widehat{F}_{i-1/2} \right)  =  
\left[ \begin{array}{c} 0 \\ \displaystyle \rho_i e^{gx_i}\frac{\psi_{i+1/2} - \psi_{i-1/2}}{\Delta x} \\ -g\rho_i u_i \end{array} \right].
\end{equation}
where $\widehat{F}_{i \pm 1/2},~ \psi_{i \pm 1/2}$ represent the WENO reconstructions of the flux function $F(U_j)$ 
and $e^{-gx_j}$ respectively. Please observe that this is just a rewrite of eq. \eqref{wbfam}. That is, the reconstruction procedure has been applied to $(F(U_j) - F(U^*_i(x_j)))$, and has only been separated into the terms in the left and right hand side to make more explicit the comparison to \cite{Xing13} (\eqref{sdmethEuler2} below). Note in particular that the choice of WENO weights in the reconstruction of $\exp(-gx_j)$ and $F(U_j)$ is not independent from each other; both use the same set of weights, arising from applying the reconstruction procedure to $(F(U_j) - F(U^*_i(x_j)))$}

\rf{Let us compare this numerical method with the one proposed in \cite{Xing13}. The strategy developed in this reference relies on the equivalent formulation of the system
\begin{equation}\label{Euler2}
\left\{
\begin{array}{l}
\rho_t + (\rho u)_x = 0, \\[0.2cm]
(\rho u)_t + (\rho u^2 + p)_x=  \rho \exp(gx)(\exp(-gx))_x, \\[0.2cm]
(E)_t + (u(E+p))_x =  \rho u \exp(gx)(\exp(-gx))_x.
\end{array}
\right.
\end{equation}
The numerical method writes as follows:
\begin{equation}\label{sdmethEuler2}
\frac{d U_i}{dt} + \frac{1}{\Delta x}   \left( \widehat{F}_{i + 1/2} - \widehat{F}_{i-1/2} \right)  =  
\left[ \begin{array}{c} 0 \\ \rho_i e^{gx_i} \\ \rho_i u_i e^{gx_i}\end{array} \right] \displaystyle 
\frac{\psi_{i+1/2} - \psi_{i-1/2}}{\Delta x},
\end{equation}
where $\widehat{F}_{i \pm 1/2}, \psi_{i \pm 1/2}$ represent again the WENO reconstructions of the flux function $F(U_j)$ 
and $ \exp(-gx_j)$ that do not coincide with the ones  appearing in \eqref{sdmethEuler1}: although the WENO coefficients used to compute both reconstructions are again the same, in this case, they rely on the smoothness indicators corresponding to the fluxes $F(U_j)$. The numerical methods are  closely related, the only differences being:}
\begin{itemize}
\item \rf{No explicit reformulation of the source term has been done to obtain \eqref{sdmethEuler1}.}
\item \rf{The  equality $g = \exp(gx)(\exp(-gx))_x$ is not used in the third equation of \eqref{sdmethEuler1}.}
\item \rf{While WENO coefficients in \eqref{sdmethEuler2} take into account the smoothness of the numerical solution, those in \eqref{sdmethEuler2} take into account the smoothness of the fluctuations with respect to the local equilibrium. If the stationary solutions are assumed to be smooth both smoothness indicators should be similar. Nevertheless, if the stationary solutions are discontinuous like in Section \ref{s:Hdisc}, the difference may be important.}
\end{itemize}

\subsection{Conservation property}\label{ss:mc}
\rf{We show here that the methods introduced in Sections \ref{ss:wbmeth} and \ref{ss:wb1} reduce to conservative schemes when $H$ is locally constant  provided that  the only stationary solutions of the homogeneous problem \eqref{scl} are  constant (which is the case for the shallow water system) and that the reconstruction operator satisfies the following property: given $\mathcal{F}_{i-r}, \dots, \mathcal{F}_{i+s}$ in $\mathbb{R}^N$ and an arbitrary vector $V$ in $\mathbb{R}^N$, the following equality holds
\begin{equation}\label{eq:consproprec}
{\mathcal{R}}(\mathcal{F}_{i-r} - V, \dots, \mathcal{F}_{i+s}- V) = {\mathcal{R}}(\mathcal{F}_{i-r}, \dots, \mathcal{F}_{i+s}))  - V.
\end{equation}
}

\rf{In effect, let us assume that \eqref{eq:consproprec} is satisfied. Then one has:
 $$ \mathcal{F}_j = F(U_j) - F(U^*_i(x_j)), \quad j = i-r-1,\dots, i+s$$    
 where $U^*_i$ is a stationary solution defined in the interval $[x_{i-r-1}, x_{i+s}]$. If $H$ is constant in this interval, then $U^*_i$ is also constant. Therefore,
 due to \eqref{eq:consproprec} one has
\begin{eqnarray*}
\widehat{\mathcal{F}}_{i,i+1/2} & =&  \widehat{F}_{i+1/2} - F(U^*_i), \\
\widehat{\mathcal{F}}_{i,i-1/2} & =& \widehat{F}_{i-1/2} - F(U^*_i),
\end{eqnarray*}
where
\begin{eqnarray*}
F_{i+1/2} & =&  {\mathcal{R}}(F(U_{i-r}), \dots, F(U_{i+s})), \\
F_{i-1/2} & = & {\mathcal{R}}(F(U_{i-r-1}), \dots, F(U_{i+s-1}),
\end{eqnarray*}
and thus the numerical method reduces to \eqref{sdmethscl-1}. Notice  that both the upwind and the splitting versions of the WENO reconstructions satisfy \eqref{eq:consproprec}.}

\rf{Nevertheless, unlike the high order finite volume methods based on a similar principle discussed in \cite{CastroPares2020}, the methods introduced here are not conservative in general for the conservative subsystems of \eqref{sbl}  when $H$ is not locally constant. In  effect, let us assume that there exists $I \in \{1, \dots, N\}$ such that:
$$
S(U) = \left[ 0, \dots ,0, S_{I+1}(U),  \dots , S_N(U) \right]^T, 
$$
i.e. the first $I$ equations of \eqref{sbl} are conservation laws: this is the case for the shallow water system with $I = 1$.
Observe that, since $U^*_i$ is a stationary solution, one has
\[
\partial_x f_l(U^*_{i}(x)) = 0, \quad l=1, \dots, I,
\]
so that the first $I$ components   of the function $F(U^*_i(x))$ are constant. Therefore, if one had the equality
\begin{equation}\label{eq:consproprec2}
{\mathcal{R}_l}\bigl(F(U_{i-r}) - F(U_i^*(x_{i-r})) , \dots, F(U_{i+s})- F(U_i^*(x_{i+s}))\bigr) = {\mathcal{R}}_l\bigl(F(U_{i-r}) , \dots, F(U_{i+s}) \bigr) - f_l(U^*_i), 
\end{equation}
for $l= 1, \dots,I$, where ${\mathcal{R}_l}$ represents the $l$th component of the reconstruction, then the numerical method would reduce to a conservative one for the first $I$ equations, as it can be easily checked.  Since the first $I$ components of $F(U_i^*)$ are constant,   property \eqref{eq:consproprec2} seems to be similar to \eqref{eq:consproprec} and one can expect  that  WENO reconstructions also satisfy it, but this is not true in general. Let us see why: in the case of the upwind implementation, one has
\[
\begin{split}
& \widehat{\mathcal{F}}_{i,i+1/2} = P^+_{i+1/2}  {\mathcal{R}^L}\bigl(F(U_{i-k}) - F(U_i^*(x_{i-k})) , \dots, F(U_{i+k})- F(U_i^*(x_{i+k}))\bigr) \\
& \qquad + P^-_{i+1/2}  {\mathcal{R}^R}\bigl(F(U_{i-k+1}) - F(U_i^*(x_{i-k+1})) , \dots, F(U_{i+k+1})- F(U_i^*(x_{i+k+1}))\bigr) 
\end{split}
\]
If all the eigenvalues are positive, then $P_{i+1/2}^+ = I$, $P_{i+1/2}^- = 0$ and thus
\[
\widehat{\mathcal{F}}_{i,i+1/2} =  {\mathcal{R}^L}\bigl(F(U_{i-k}) - F(U_i^*(x_{i-k})) , \dots, F(U_{i+k})- F(U_i^*(x_{i+k}))\bigr).
\]
Therefore, since the reconstructions are computed component by  component one has}
\rf{
\[
\widehat{f}_{i, i+1/2; l} =  {\mathcal{R}_l^L}\bigl(F(U_{i-k}) , \dots, F(U_{i+k})\bigr) -  f_l(U_i^*),\quad l=1, \dots, I, 
\]
 where $\widehat{f}_{i, i+1/2; l}$ represents the $l$th component of $\widehat{\mathcal{F}}_{i,i+1/2}$. Analogously,  one has
 \[
\widehat{f}_{i, i-1/2; l} =  {\mathcal{R}_l^L}\bigl(F(U_{i-k-1}) , \dots, F(U_{i+k-1})\bigr) -  f_l(U_i^*),\quad l=1, \dots, I, 
\]
and then the numerical method is conservative for the first $I$ equations. And the same happens if all the eigenvalues are negative. Nevertheless, in the general case
the product by the projection matrices $P^\pm_{i+1/2}$ mixes the different variables and \eqref{eq:consproprec2} is not satisfied in general.}

\rf{In the case of the splitting implementation, WENO reconstructions are computed as follows: 
\[
\begin{split}
& \widehat{\mathcal{F}}_{i,i+1/2}  = {\mathcal{R}^L}\bigl(F^+(U_{i-k}) - F^+(U_i^*(x_{i-k})) , \dots, F^+(U_{i+k})- F^+(U_i^*(x_{i+k}))\bigr) \\
& \qquad + {\mathcal{R}^R}\bigl(F^-(U_{i-k+1}) - F^-(U_i^*(x_{i-k+1})) , \dots, F^-(U_{i+k+1})- F^-(U_i^*(x_{i+k+1}))\bigr) ,
\end{split}
\]
but now the first $I$ components of $F^\pm(U^*_i(x))$
\[
f_l^\pm(U^*_i(x)) = \frac{1}{2} \left(f_l(U^*_i) \pm \alpha u^*_l(x) \right), \quad l = 1, \dots, I
\]
are not constant in general: while $f_l(U^*_i)$ is constant, this is not the case in general for $u_l^*$, so that  \eqref{eq:consproprec2} is not satisfied in general. 
}

\rf{
Nevertheless, there are some exceptions in which conservation can be proved for the first $I$ components  of the system: this is the case, for instance, for the methods that only preserve one stationary solution. In effect,  in this case the first $I$ equations of \eqref{wb1s} write as follows:
$$
\frac{d u_{l,i}}{dt} + \frac{1}{\Delta x}   \left( \widehat{{f}}_{l, i + 1/2} - \widehat{{f}}_{l, i-1/2} \right)  =  0,\quad l = 1, \dots, I,
$$
where $u_{j,i}$ and $\widehat{{f}}_{j, i + 1/2}$ represent respectively the $j$th component of $U_i$ and $\widehat{\mathcal{F}}_{i + 1/2}$. It is thus conservative for the first $I$ equations.
}

\rf{Another exception is the family of numerical methods that preserve the water-at-rest solutions introduced  in the previous section: in this case, WENO  reconstructions with the global Lax-Friedrichs splitting strategy lead to schemes for which the mass is conserved. Observe that, in this case, the first component
of 
\[ F^\pm(U_{i+j}) - F^\pm(U^*_i(x_{i+j}))
\]
writes as follows
\[
\frac{1}{2}\left(q_{i+j} \pm \alpha (h_{i+j} - \eta_i - H(x_{i+j})\right) = \frac{1}{2}\left(q_{i+j} \pm \alpha \eta_{i+j} \right) \mp \frac{\alpha}{2} \eta_i,
\]
and thus, using \eqref{eq:consproprec} one has:}
\rf{
\[
 \widehat{f}_{i, i+1/2; 1} = \frac{1}{2}\left( \hat f^-_{i+1/2} +  \hat f^+_{i+1/2} \right) ,
\]
where }
\rf{
\[ 
\begin{split}
\hat f^-_{i+1/2} & =  \mathcal{R}_1^R\left( q_{i-k+1} - \alpha \eta_{i-k+1}, \dots,  q_{i+k+1}- \alpha \eta_{i+k+1} \right),\\
\hat f^+_{i+1/2}  & =  \mathcal{R}_1^L\left(  q_{i-k} + \alpha \eta_{i-k} , \dots,  q_{i+k} + \alpha \eta_{i+k}  \right). 
\end{split}
\]
The numerical mass fluxes do not depend on the stationary solution $U^*_i$  and thus mass is conserved. 
}

\rf{Let us finally mention that, in some cases, conservation can be restored for the first $I$ equations using an adequate splitting. For instance, 
the numerical method \eqref{sdmethEuler2} has in principle the same  difficulty concerning mass conservation. In \cite{Xing13}, the standard Lax-Friedrichs splitting is replaced by
\[
F^\pm(U_j) = 
\frac{1}{2} \left( \left[ \begin{array}{c} \rho_j u_j \\ \rho_j u_j^2 + p_j \\ (E_j + p_j) u_j \end{array}\right]
\pm \alpha'\left[ \begin{array}{c} \rho_j e^{gx_j} \\ \rho_j u_j e^{gx_j} \\ E_je^{gx_j}\end{array}\right]\right)
\]
with an adequate choice of $\alpha'$: the corresponding numerical method is well-balanced (since the viscous term is constant for a stationary solution) and mass-conservative (since the expression of the viscous term does not depend on $i$). A similar treatment could be applied to \eqref{sdmethEuler1},
by defining:
\[
\mathcal{F}^\pm_j =  \left[ \begin{array}{c} \rho_j u_j \\ \rho_j u_j^2 + p_j - p^*_i(x_j) \\ (E_j + p_j) u_j \end{array}\right] \pm \alpha' \left[ \begin{array}{c} \rho_j e^{gx_j} \\ \rho_ju_j e^{gx_j} \\ E_je^{gx_j}\end{array}\right], \quad j = i-k-1,\dots, i+k.
\]
A similar procedure may be followed for the numerical methods that preserve every stationary solution of the shallow water equations: if we define
\[
\mathcal{F}^\pm_j =
 \left[ \begin{array}{c} q_j \\ 
 \displaystyle \frac{q_j^2}{h_j} + \frac{g}{2}h_j^2 - \frac{q_i^2}{h^*(x_j)} - \frac{g}{2}h^*_i(x_j)^2  \end{array}\right] \pm \alpha \left[ \begin{array}{c} \displaystyle h_j - H(x_j) + \frac{1}{2g} \frac{q_j^2}{h_j^2} \\ q_j \end{array}\right], \quad j = i-k-1,\dots, i+k,
\]
then the numerical method is still well balanced, since the viscous term is constant for stationary solutions, and the mass is preserved since again the viscous term does not depend on the interval $I_i$. Nevertheless, if we write the viscous term in $\eta, u$ variables
$$
G(\eta,q) = \left[  \begin{array}{c} \displaystyle \eta-  \frac{1}{2g} \frac{q^2}{(\eta + H)^2}  \\ q \end{array} \right],
$$
and compute its gradient
$$
\nabla G(\eta,q) = \left[ \begin{array}{cc} \displaystyle 1 - \frac{u^2}{gh} & \displaystyle \frac{u}{gh}  \\  0 & 1 \end{array} \right],
$$
we can  see that their eigenvalues are positive only if  the flow is subcritical: in this case, the eigenvalues of the Jacobian 
of $\mathcal{F}^\pm_j$ may be made positive/negative by taking $\alpha$ large enough, but for supercritical or transcritical flows, this approach is not expected to give stable numerical methods. Moreover, the eigenvectors of $\nabla G$ are different from those of the Jacobian of $F(U)$ so that not even for subcritical flows the stability is guaranteed, as it will be seen in  Section \ref{ss:tests}.
}

\section{Extensions of the methods}

\subsection{\rs{Unknown  stationary solutions}}\label{ss:Cauchynum}
\rs{In Section \ref{ss:wb1} it has been assumed that the solutions of the Cauchy problem \eqref{Cauchy2} were known or easy to compute. If it is not the case,
the solutions of these Cauchy problems can be approximated using an ODE solver, for instance a one-step method, whose order of accuracy is higher than that of the reconstruction operator, as it is done in \cite{CastroGomezPares} for finite volume methods.
The steps to compute the numerical fluxes are in this case as follows:} 

\begin{itemize}
    \item \rs{Compute approximations $\widetilde U^*_j$, $j= i+1, \dots, i+s$ of the solution of the Cauchy problem \eqref{Cauchy2} at the points 
    $x_{i+1}, \dots, x_{i+s}$ by applying an ODE solver
    \begin{equation}\label{forward}
    U^*_{k+1} = U^*_k + h \Phi_h(z_k, U^*_k), \quad k =0, \dots, sK-1,
    \end{equation}
    in a mesh of step $h = \Delta x_k/K$,  where $K$ is a positive integer. This new mesh is designed so  that 
    \[
    x_{i+j} = z_{jK}, \quad j=0, \dots, s,
    \]
    and then $\widetilde U^*_j = U^*_{jK}$.
    }
    
   \item \rs{Compute approximations $\widetilde U^*_j$, $j= i-r-1, \dots, i-1$ of the solution of the Cauchy problem \eqref{Cauchy2} at the points 
    $x_{i-r-1}, \dots, x_{i-1}$ by a backward application of the ODE solver
    \begin{equation}\label{backward}
    U^*_{-k-1} = U^*_{-k} - h \Phi_{-h}(z_{-k}, U^*_{-k}), \quad k =0, \dots, (r+1)K-1,
    \end{equation}
    in a mesh of step $h = \Delta x_k/K$,  where $K$ is a positive integer. Again 
    \[
    x_{i-j} = z_{-jK}, \quad j=0, \dots, r+1,
    \] 
    so that $\widetilde U^*_{i-j} = U^*_{-jK}$.
    }

\item \rs{Define
    $$ \mathcal{F}_j = F(U_j) - F(\widetilde U^*_j), \quad j = i-1-r,\dots, i+s.$$    }

\item  \rs{Compute
\begin{eqnarray*}
\widehat{\mathcal{F}}_{i,i+1/2} & =&  \rs{\mathcal{R}}(\mathcal{F}_{i-r}, \dots, \mathcal{F}_{i+s}), \\
\widehat{\mathcal{F}}_{i,i-1/2} & = & \rs{\mathcal{R}}(\mathcal{F}_{i-1-r}, \dots, \mathcal{F}_{i-1+s}).
\end{eqnarray*}
}

\end{itemize}

\rs{Such a numerical method  is expected to be well-balanced, as long as that the sequences $\{ U^*_i \}$ of approximations of solutions
of \eqref{Cauchy2}  computed with the chosen ODE solver using a sub-mesh of step $h$ are stationary solutions of the ODE system \eqref{wbsdmeth}. 
Observe that this method would allow one to approximate the stationary solutions of \eqref{sbl} with higher order of accuracy than any other solution: they are approximated by the stationary solutions of  \eqref{wbsdmeth} with the order of accuracy of the chosen ODE solver. Moreover, the error can be made arbitrarily small by choosing $K$ large enough, i.e. using a sufficiently fine submesh.}

\subsection{\rf{Resonant problems}}
\rf{So far we have assumed for simplicity that the eigenvalues of  $J(U)$ cannot vanish. Let us consider now the general case. The principle to design well-balanced numerical methods is the same, but now in the algorithms to compute the numerical fluxes $\mathcal{F}_{i, i\pm 1/2}$ the Cauchy problem to be solved \eqref{Cauchy} cannot be written in normal form \eqref{Cauchy2} in general. As a consequence, \eqref{Cauchy} may not have a solution, or have more than one when the solution involves sonic states: in this case, the problem is said to be resonant. }

\rf{If \eqref{Cauchy} doesn't have any solution, then the data on the stencil cannot be the point values of a stationary solution and thus the numerical method
\eqref{sdmeth} will be used to update $U_i$. If it has more than one solution, a criterion is needed to select one or the other. In general this criterion may depend on the problem and on the stationary solutions to be preserved. Let us illustrate this in the case of the shallow water model.} 

\rf{For the shallow water system, given $U_i = [h_i, q_i]^T$, the solution $U^*_i = [q^*_i, h^*_i(x)]^T$ of \eqref{Cauchy} is implicitly given by
$$
q^*_i = q_i, \quad \frac{1}{2}\frac{q_i^2}{{h^*_i}^2} + gh^*_i - gH = C_i,
$$ 
with
$$
C_i = \frac{1}{2} \frac{q_i^2}{h_i^2} + gh_i - gH(x_i).
$$
Therefore, at a point $x_j$ of the stencil, one has $q_i^*(x_j) = q_i$ and $h_i^*(x_j)$ has to be a positive root of the polynomial:
$$
P_{i,j}(h) = h^3 - \left(\frac{C_i}{g} + gH(x_j)\right) h ^2+ \frac{1}{2g}q_i^2.
$$
This polynomial can have two, one, or zero positive roots. In the first case, one of the roots corresponds to a supercritical state and the other one to a subcritical state: a criterion is necessary to select one root or the other. We follow here a similar criterion to the one chosen in \cite{lopez2013} in the context of finite volume methods. A key point in this criterion is the following observation: if a smooth stationary solution $U^*$ defined  in an interval $[a, b]$ reaches a critical point at $x^*$, then necessarily $H$ has a minimum in $x^*$: see \cite{lopez2013}. In order to take into account this fact in  the selection procedure, the mesh is supposed to be such that the minimum points of $H$ belong to the set of nodes. The criterion is then as follows:}

\begin{itemize}
    \item \rf{ If all the states in the stencil $\widehat{\mathcal{S}}_i$ are supercritical and  $P_{i,j}$ has two positive roots, then the supercritical root is chosen.}
    
    \item \rf{If all the states in the stencil $\widehat{\mathcal{S}}_i$ are subcritical and  $P_{i,j}$ has two positive roots, then the subcritical root is chosen.}
    
    \item \rf{If there are subcritical and supercritical states in the stencil, the cell values can  only be the point values of a smooth stationary solution if:}
    \begin{itemize} 
    \item \rf{ $H$ has a minimum in one of the points of the stencil $x_k$.}
    \item \rf{$P_{i,k}$ has only one (critical) root.}
    \item \rf{The states at the right (resp. at the left) of $x_k$ have the same regime (sub or supercritical).}
    \end{itemize}
    \rf{If these assumptions are satisfied, then the critical root is selected  in $x_K$ (in fact, it is the only one) and if 
    $P_{i,j}$ with $j\not=k$ has two positive roots,  the supercritical one is chosen if $U_j$ is supercritical and the subcritical one is chosen if $U_j$ is subcritical.  If at least one of the above assumptions is not satisfied, then \eqref{sdmeth} is used to update $U_i$.}

\end{itemize}

\subsection{Discontinuous $H$}\label{s:Hdisc}

Let us suppose now that $H$ is a.e. differentiable with finitely many isolated jump discontinuities. In this case, the definition of weak solutions (and, in particular, of stationary solutions)  of \eqref{sbl}  becomes more difficult:  a solution $U$ is expected to be discontinuous at the  discontinuities of $H$ and, in this case,  the source term
$ S(U)H_x$ cannot be defined within the distributional framework. The source term becomes then a nonconservative product that can be defined in infinitely many different forms: see \cite{DalMaso95}.
\rs{We follow here the definition discussed in \cite{CastroPares2020}, based on the ODE system
\begin{equation}\label{jumpcondition1}
 \frac{d \ }{d\sigma}F(V) = S( V)
\end{equation}
where $\sigma$ represents the independent variable. The solutions of this ODE system may be seen as a generator of the stationary solutions of \eqref{sbl}: in effect, let us assume that $V(\sigma)$ solves \eqref{jumpcondition1}; then, given any differentiable function $H(x)$ such that
$V$ and $H$ can be composed, it can be trivially checked that
\begin{equation}\label{weaksol}
U(x) = V(H(x))
\end{equation}
is a stationary solution of \eqref{sbl}. Taking this into account, we assume here that any function of the form
\[
U^*(x) = V(H(x)),
\]
where $V$ is a solution of \eqref{jumpcondition1}, is an admissible stationary solution of \eqref{sbl} even when  $H$ has discontinuities. The idea behind this assumption is that the admissible weak solutions lie on only one integral curve of \eqref{jumpcondition1}, that is, jumps from an integral curve to another  one are forbidden at the discontinuities of $H$.}

\rs{
Let us  illustrate this in the case of the shallow water model. Although the application of the shallow water model for the simulation of a flow over a discontinuous bottom can be debatable, many authors have used it to obtain a rough simulation of the flow behavior. In any case, we consider here this application as a challenging test from the numerical analysis point of view:
once the admissible jumps at a discontinuity of $H$ have been chosen, the challenge is to design numerical methods that preserve the admissible stationary solutions. }

\rs{For this system \eqref{jumpcondition1} writes as follows: }
$$
\rs{
\left\{
\begin{array}{l}
\displaystyle  \frac{dq}{d \sigma} = 0,\smallskip\\
\displaystyle \frac{d \ }{d\sigma} \left( \frac{q^2}{h} + \frac{g}{2} h^2 \right) = g h,
\end{array}
\right.}
$$
\rs{whose solutions are implicitly given by:
$$
q = C_1, \quad \frac{1}{2}\frac{q^2}{h^2} + gh - g\sigma = C_2.
$$ 
Therefore, a function $U^* = [h^*, q^*]^T$ is considered to be an admissible weak solution if there exist two constants $C_1$ and $C_2$
such that
$$
q^*(x) = C_1, \quad \frac{1}{2}\frac{q^*(x)^2}{(h^*(x))^2} + gh^*(x) - gH(x) = C_2, \quad \forall x,
$$ 
which in particular implies that, at a discontinuity point $\bar x$ of $H$, one has:}

\rs{
\begin{equation}
\left\{
\begin{array}{l}
\displaystyle q^*(\bar x^-) = q^*(\bar x^+), \\
\displaystyle  \frac{1}{2}\frac{q^*(\bar x^-)^2}{(h^*(\bar x^-))^2} + gh^*(\bar x^-) - gH(\bar x^-) 
= \frac{1}{2}\frac{q^*(\bar x^+)^2}{(h^*(\bar x^+))^2} + gh^*(\bar x^+) - gH(\bar x^+).
\end{array}
\right.
\end{equation}
This is thus the jump condition satisfied by the admissible stationary solutions at the discontinuity points of $H$ that, in this case, may be interpreted in terms of the continuity of the mass-flow and the total energy.}

\rs{Coming back to the general case, the definition of the nonconservative product at a discontinuity point $\bar x$ of $H$ issued from our assumption is as follows:
\begin{equation}\label{dirac1}
S(U^*)H_x (\bar x) = (F(V(H(\bar x^+)) -  F(V(H(\bar x^-))))\left.\delta\right|_{x = \bar x},
\end{equation}
where $\delta$ represents Dirac's delta. 
This definition may be interpreted in terms of the choice of a particular  family of paths within the theory developed by DalMasso, LeFloch, and Murat in \cite{DalMaso95}. Moreover, it can be also interpreted in terms of the preservation of the Riemann invariants at the contact discontinuities of an extended system: see \cite{CastroPares2020} for details. }

\rs{The well-balanced numerical method \eqref{wbsdmeth}  can be easily adapted to this case just by looking for stationary solutions of the form
\eqref{weaksol} at the first stage of the algorithm that computes the numerical fluxes. More precisely, let us assume that the mesh has been designed so that all the discontinuity points of $H$ are located in an  intercell. Then, the  numerical fluxes are computed as follows:}
\begin{enumerate}

\item \rs{Look for the solution  $V^*_i (\sigma)$ of \eqref{jumpcondition1} such that:
\begin{equation}\label{firststageHd}
 V^*_i (H_i)  = U_i,
\end{equation}
where $H_i = H(x_i)$.}

\item \rs{Define
    $$ \mathcal{F}_j = F(U_j) - F(V^*_i(H(x_j))), \quad j = i-1-r,\dots, i+s$$    }

\item  \rs{Compute
\begin{eqnarray*}
\widehat{\mathcal{F}}_{i,i+1/2} & =&  \rs{\mathcal{R}}(\mathcal{F}_{i-r}, \dots, \mathcal{F}_{i+s}), \\
\widehat{\mathcal{F}}_{i,i-1/2} & = & \rs{\mathcal{R}}(\mathcal{F}_{i-1-r}, \dots, \mathcal{F}_{i-1+s}).
\end{eqnarray*}
}

\end{enumerate}

\rs{ The numerical fluxes issued from the algorithm are formally consistent with the definition \eqref{dirac1} for the nonconservative product.} Although consistency is not enough to guarantee the convergence to the right weak solution of nonconservative systems (see \cite{ParesMunoz}, \cite{CLFMP2008}) the numerical tests in Section \ref{ss:tests} show that the numerical methods capture the correct weak solutions.

In order to compare the behaviour of the different methods in the presence of a discontinuity of $H$, for WENO methods \eqref{sbl} the Dirac delta issued from a discontinuity of $H$ will be approached by
\begin{equation}\label{dirac2}
S_{I-1/2}\frac{H^+(x_{I-1/2}) - H^-(x_{I-1/2})}{\Delta x}\left.\delta \right|_{x = x^*},
\end{equation}
where $S_{I-1/2}$ is some intermediate value of $S(U)$ at the discontinuity. An upwind treatment of the singular source term is then used, so that the numerical method for the neighbor nodes writes as follows:
\begin{eqnarray}\label{sdmethsstsw}
& & \frac{d U_{I-1}}{dt} + \frac{1}{\Delta x} \left( \widehat{{F}}_{I - 1/2} - \widehat{{F}}_{I-3/2} \right)  = 
S(U_{I-1})H_x(x_{I-1}) + S^-_{I-1/2}, \\
& & \frac{d U_{I}}{dt} + \frac{1}{\Delta x}   \left( \widehat{{F}}_{I + 1/2} - \widehat{{F}}_{I-1/2} \right)  = 
S(U_{I})H_x(x_{I}) + S^+_{I-1/2}, \nonumber
\end{eqnarray}
where
$$
S^\pm_{I-1/2} = P^\pm_{i+1/2} S_{I-1/2}\frac{H(x_I) - H(x_{I-1})}{\Delta x}.
$$
Here, $P^+_{i+1/2} $ are the projection matrices given by \eqref{projectors}. 
It will be seen in Section \ref{ss:tests} that these methods do not converge to the assumed weak solutions. Moreover, the behaviour of the numerical methods at the discontinuity of $H$ depends both on $\Delta x$ and the chosen intermediate state. 

In the case of the methods that only preserve one stationary solution $U^*$, the source term in the neighbor nodes of the discontinuity will be computed as follows:
\begin{eqnarray}\label{sdmethwb1sw}
& & \frac{d U_{I-1}}{dt} + \frac{1}{\Delta x}   \left( \widehat{\mathcal{F}}_{I - 1/2} - \widehat{\mathcal{F}}_{I-3/2} \right)  = 
(S(U_{I-1})- S(U^*(x_{I-1})))H_x(x_{I-1}) + S^-_{I-1/2}- S^{*-}_{I-1/2}, \\
& & \frac{d U_{I}}{dt} + \frac{1}{\Delta x}   \left( \widehat{\mathcal{F}}_{I + 1/2} - \widehat{\mathcal{F}}_{I-1/2} \right)  = 
(S(U_{I})- S(U^*(x_I)))H_x(x_{I}) + S^+_{I-1/2}- S^{*+}_{I-1/2}, \nonumber
\end{eqnarray}
with
$$
S^{*\pm}_{I-1/2} = P^\pm_{i+1/2} S^*_{I-1/2}\frac{H(x_I) - H(x_{I-1})}{\Delta x}.
$$
Clearly, if $U_i = U^*(x_i)$ for $i = I-1, I$ the right-hand sides vanish and the approximation of the Dirac mass is given again 
by \eqref{dirac1} but, if it is not the case, a combination of \eqref{dirac1} and \eqref{dirac2} is used. Therefore, this method is only consistent with the definition of weak solution when the stationary solution $U^*$ is not perturbed at the neighbour nodes of the discontinuity, as it will be seen in Section \ref{ss:tests}.

\rf{
\subsection{Multidimensional problems}\label{ss:2d}
Although the extension to multidimensional problems of the methods introduced here is out of the scope of this paper, let us briefly discuss about it. Let us focus on two-dimensional  problems:
\begin{equation}\label{sbl2d}
U_t + F_1(U)_x + F_2(U)_y = S_1(U) H_x + S_2(U)H_y.
\end{equation}
In principle, the idea developed here can be applied: if $U^*$ is a stationary solution satisfying
\begin{equation}\label{initialcond}
U^*(x_i, y_j) = U_{i,j},
\end{equation}
then the source term can be discretized as follows:
\[
S_1(U_{i,j}) H_x(x_{i,j}) + S_2(U_{i,j})H_y(x_{i,j}) = \partial_x F_1(U^*(x_{i,j})) + \partial_y F_2(U^*(x_{i,j})),
\]
where $x_{i,j}$ are the nodes of a Cartesian mesh of step sizes $\Delta x$, $\Delta y$. Then, the reconstruction operator would be applied to
the differences
\begin{equation}\label{sbldss}
F_1(U_{i,j}) - F_1(U^*(x_{i,j})), \quad F_2(U_{i,j}) - F_2(U^*(x_{i,j}))
\end{equation}
to compute the numerical fluxes in both directions.}

\rf{Of course, the main difficulty comes from the fact that now the problem to be solved for finding $U^*$ is a PDE system:
\[
F_1(U^*)_x + F_2(U^*)_y = S_1(U^*) H_x + S_2(U^*)H_y,
\]
that is much more difficult to solve either exactly or numerically  than an ODE system. Moreover in this case \eqref{initialcond} does not determine a stationary solution:  there may exist infinitely many stationary solutions satisfying this equality. The only way to extend the numerical method introduced  here in order to preserve any stationary solution would be to replace  \eqref{initialcond} by some adequate boundary conditions in the stencil that guarantee the uniqueness of solution and then solve numerically the corresponding boundary value problem. Nevertheless this program is far from being easy: the selected  boundary conditions must take into account the character of the PDE satisfied by the stationary solutions (hyperbolic, elliptic, mixed, etc.) that can change from  one stencil to another.}

\rf{On  the other hand, the extension to 2d problems of the numerical methods that preserve a given family of known stationary solution
introduced in Section \ref{ss:wb1} is straightforward: if the family depends on $k$ parameters
\[
U^*(x,y; C_1, \dots, C_k)
\]
with $k \leq N$, the first step of the algorithm to compute the numerical fluxes would be:}

\rf{Find $C^{i,j}_1, \dots, C^{i,j}_k $ such that
\[
u^*_{j_l}(x_i,y_j;C^{i,j}_1, \dots,  C^{i,j}_k ) = u_{i,j,l}, \quad l=1, \dots, k,
\]
where $j_1, \dots, j_k$ is a predetermined set of $k$ indices.}

\rf{In particular, in the case of the shallow water model, the extension of the numerical methods that preserve water-at-rest solutions to 2d is straightforward: it is enough to apply the computation of the numerical fluxes shown in Section \ref{ss:wb1} in  both directions.}

\section{Numerical tests}\label{ss:tests}
In this section we apply the numerical methods introduced in Sections 2 and 3 to a number of test cases with $H$ continuous or discontinuous.
In the first two subsections we consider two scalar problems: the linear transport equation and Burgers equations with source terms. Three families
of methods based on WENO reconstructions of order $p$ will be compared:
\begin{itemize}
    \item WENO$p$: methods of the form \eqref{sdmeth}.
\item WBWENO$p$: methods of the form \eqref{wbsdmeth} that preserve any stationary state.
\item WB1WENO$p$: methods of the form \eqref{wb1s} that preserve only one given stationary state.
\end{itemize}
Nevertheless, in many test cases the results obtained with WBWENO$p$ and WB1WENO$p$ are indistinguishable: in those cases, only the results corresponding to 
WBWENO$p$ will be shown. In  all cases, the global Lax-Friedrichs flux-splitting approach is used for WENO implementation and the third order TVD-RK3 method is applied for the time discretization: see \cite{Gottlieb98}.
The CFL parameter is set to 0.5.

\subsection{A linear problem}
We consider the linear scalar  problem
\begin{equation}\label{testlinear}
u_t + u_x = u H_x.
\end{equation}
 In this case, \eqref{jumpcondition1} reduces to
\begin{equation}\label{odetest1}
\frac{dv}{d\sigma} = v,
\end{equation}
whose solutions are
$$
V(\sigma) = C e^\sigma, \quad C \in \mathbb{R}.
$$
The stationary solutions of \eqref{testlinear} for any given $H$ are thus given by:
\begin{equation}\label{sstest1}
u^*(x) = C e^{H(x)}, \quad C \in \mathbb{R}.
\end{equation}
The solution of \eqref{firststageHd} is thus
$$
v^*_i (\sigma) = u_i e^{\sigma- H_i}.
$$
Therefore, well-balanced methods are based on the reconstructions of 
$$
f_j =  u_j - \rf{ u_i e^{(H(x_j)-H_i)}}, \quad j = i- r, \dots, i + s.
$$

\subsubsection{Order test}\label{ss_orderlHc}
Let us consider \eqref{testlinear} with
$$
H(x) = x.
$$
It can be easily checked that the solution of \eqref{testlinear} with initial condition:
$$
u(x,0) =  u_0(x),\quad x \in \mathbb{R}
$$
is given by
$$
u(x,t) = e^tu_0(x-t), \quad x \in \mathbb{R}.
$$
Let us consider the initial condition:
\begin{equation}\label{test1ic}
u_0(x) = \begin{cases} 
0 & \text{if $x < 0$,}\\
p(x) & \text{if $0 \leq x \leq 1$,} \\
1 & \text{otherwise,}
\end{cases}
\end{equation}
where $p$ is the 11th degree polynomial
$$
p(x) = x^6\left(\sum_{k = 0}^5 (-1)^k \left( \begin{array}{c} 5 + k \\ k \end{array} \right) (x -1)^k \right)
$$
such that
$$
p(0) = 0, \quad p(1) = 1, \quad p^{k}(0) = p^{k}(1) = 0, \quad k =1, \dots, 5
$$
see Figure \ref{fig-orderlHc-1}.

\begin{figure}
\centering
\includegraphics[width=0.4\textwidth]{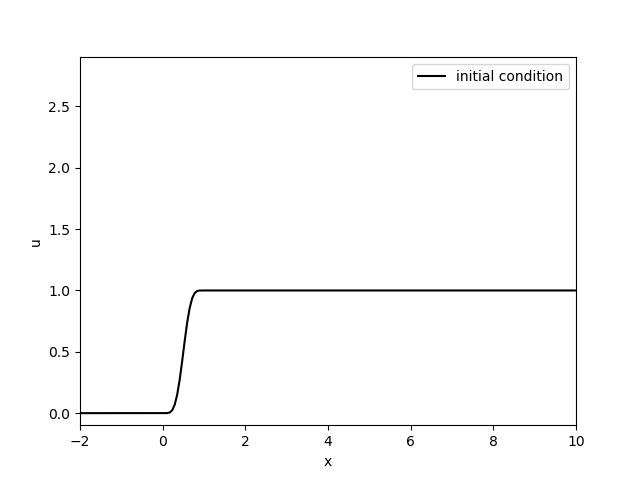}
\includegraphics[width=0.4\textwidth]{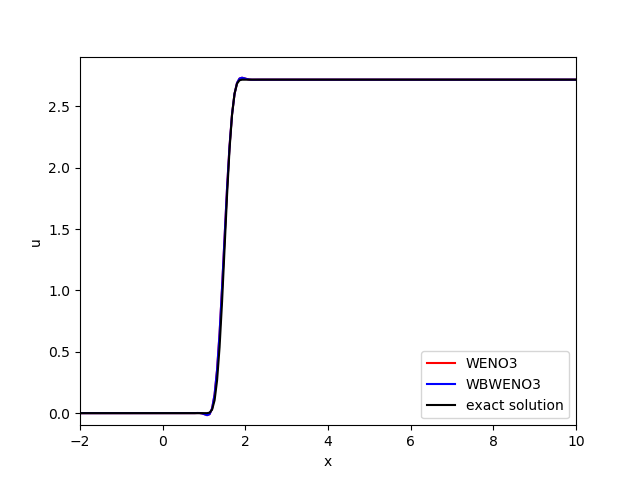}
\caption{\label{fig-orderlHc-1} Test 5.1.1: initial condition (left). Exact solution and numerical solution obtained with WBWENO3 and WBWENO5 at time $t = 1$ using a mesh of 200 cells}
\end{figure}

We solve \eqref{testlinear} with initial condition \eqref{test1ic} with well-balanced and non well-balanced third order methods in the interval $[-2, 10]$. Free boundary conditions based on the use of ghost cells are used at both extremes. Table \ref{table1} shows the $L^1$-errors and  the empirical order of convergence corresponding to WENO$p$ and WBWENO$p$, $p =3, 5$. As can be seen, both methods are of the expected order and the errors corresponding to methods of the same order are almost identical. In order to capture the expected order, the smooth indicators of the WENO reconstruction have been set  to 0 and $\Delta t = \Delta x^{5/3}$ has been chosen for the fifth order methods.

\begin{table}[H]
\centering
\begin{tabular}{|c|c|c|c|c|c|c|c|c|} \hline
 & \multicolumn{2}{|c|}{WENO3}  & \multicolumn{2}{|c|}{WBWENO3} & \multicolumn{2}{|c|}{WENO5}  & \multicolumn{2}{|c|}{WBWENO5} \\
Cells & Error  &Order&Error   & Order & Error  &Order & Error  &Order \\
\hline
100 & 1.000E-1 & -& 1.023E-1 & - & 4.0902E-2 & -& 4.0910E-2 & - \\
200  & 2.053E-2 &  2.28& 2.084E-2 & 2.29 & 2.4404E-3 &  4.06  & 2.4407E-3 & 4.06\\
400 & 2.978E-3 & 2.78 &  3.019E-3& 2.78 & 9.1307E-5 & 4.74  & 9.1315E-5 & 4.74\\
800 & 3.815E-4 & 2.96 &  3.867E-4 & 2.96 & 3.0118E-6  & 4.92 & 3.0121E-6 & 4.92\\
1600 &  4.788E-5 & 2.99 & 4.855E-5 &2.99 & 9.4849E-8 & 4.98  & 9.4857E-8 & 4.98
 \\
  \hline
\end{tabular}
\caption{Test \ref{ss_orderlHc}. Errors in $L^1$ norm and convergence rates for WB$p$ and WBWENO$p$, $p = 3,5$ at time $t = 1$.} \label{table1}
\end{table}

\subsubsection{A moving discontinuity linking two stationary solutions} \label{ss_mslHc}
Next, we consider \eqref{testlinear} with again $H(x) = x$, and initial condition
$$
u_0(x) = \begin{cases}
4e^x & \text{if $x < 0$,}\\
e^x & \text{otherwise.}
\end{cases}
$$
The solution consists of a discontinuity linking two steady states that travels at speed 1:
$$
u(x, t) = \begin{cases}
4e^x & \text{if $x < t$,}\\
e^x & \text{otherwise.}
\end{cases}
$$
Figure \ref{fig-mslHc-1} shows the exact and the numerical solutions at time $t = 1$ obtained with WBWENO3 and WENO3 (left) and a zoom of the differences of the numerical and the exact solution at the same time (right). It can be observed that the stationary states at both sides of the discontinuity are better captured with the well-balanced method. For fifth order methods the differences are lower, but still noticeable: see Figure \ref{fig-mslHc-2}. The results obtained with WBWENO$p$, and the WB1WENO$p$, $p =3,5$ that only preserve the stationary solution $u^*(x) = 4 e^x$, are indistinguishable.
\begin{figure}
\centering
\includegraphics[width=0.4\textwidth]{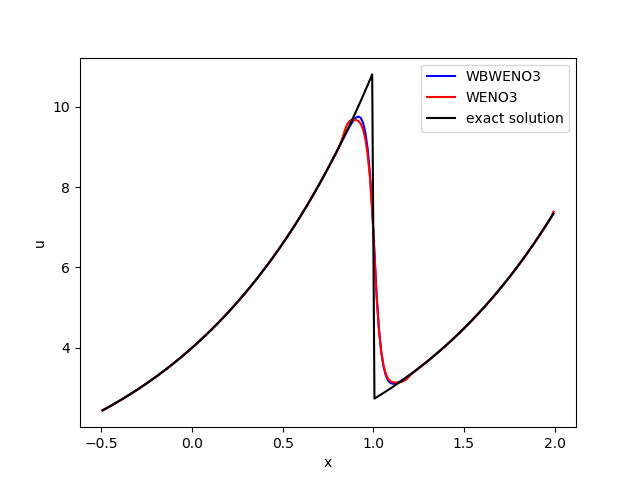}
\includegraphics[width=0.4\textwidth]{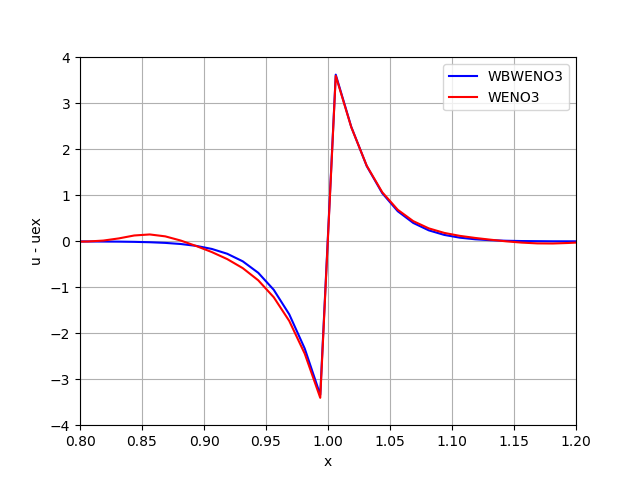}
\caption{Test \ref{ss_mslHc}: exact solution and numerical solutions obtained with WBWENO3 and WENO3 using a mesh of 200 cells, $t = 1$ (left); zoom of the differences between the numerical and the exact solutions (right)} \label{fig-mslHc-1}
\end{figure}

\begin{figure}
\centering
\includegraphics[width=0.4\textwidth]{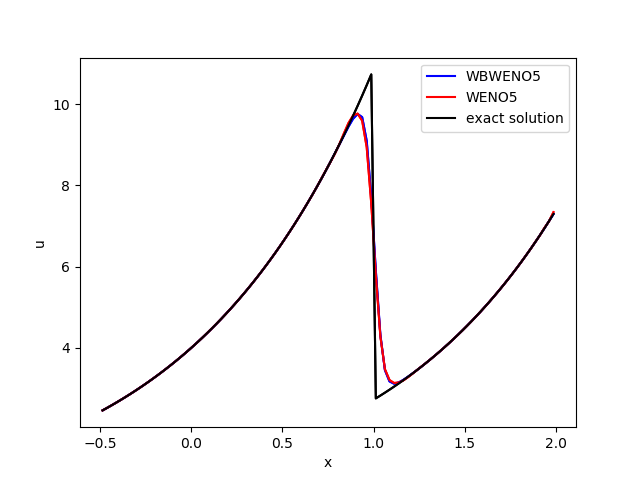}
\includegraphics[width=0.4\textwidth]{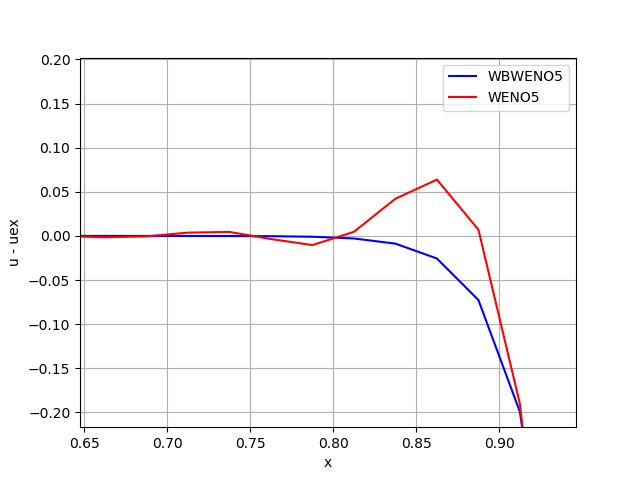}
\caption{Test \ref{ss_mslHc}: exact solution and numerical solutions obtained with WBWENO5 and WENO5 using a mesh of 100 cells, $t = 1$ (left); zoom of the differences between the numerical and the exact solutions (right)} \label{fig-mslHc-2}
\end{figure}

\subsection{Burgers' equation with source term}
We consider next the scalar equation
\begin{equation}\label{testburgers}
u_t + f(u)_x = s(u)H_x,
\end{equation}
with
$$
f(u) = \frac{1}{2}u^2, \quad s(u) = u^2.
$$

The stationary solutions are also given by \eqref{sstest1}. 
The reconstruction operator has to be applied in this case to
$$
f_j = \frac{ u_j^2}{2} - \frac{u_i^2 \rf{e^{2(H(x_j)-H_i)}}}{2}, \quad j = i- r, \dots, i + s.
$$

\subsubsection{Preservation of a stationary solution with smooth $H$}\label{ss_stBHc}
In this test case we consider $H(x) = x$ and the stationary solution
\begin{equation}
u(x) = e^x.
\end{equation}
Let us solve  \eqref{testburgers}  taking this stationary solution as initial condition in the interval 
$[-1, 1]$. As boundary conditions, the value of the stationary solution is imposed at the ghost cells.
Figures  \ref{fig-stBHc-1} and \ref{fig-stBHc-2} show the differences between the stationary solution and the numerical solutions obtained at time $t = 8$ with WENO$p$ and WBWENO$p$, $p = 3, 5$ using a 200-cell mesh: the well-balanced methods capture the stationary solution with machine accuracy. This is confirmed by Tables 
\ref{table_stBHc-1} and \ref{table_stBHc-2} that show the $L^1$-errors and  the empirical order of convergence corresponding to WENO$p$ and WBWENO$p$, $p =3, 5$.
\begin{figure}
\centering
\includegraphics[width=0.4\textwidth]{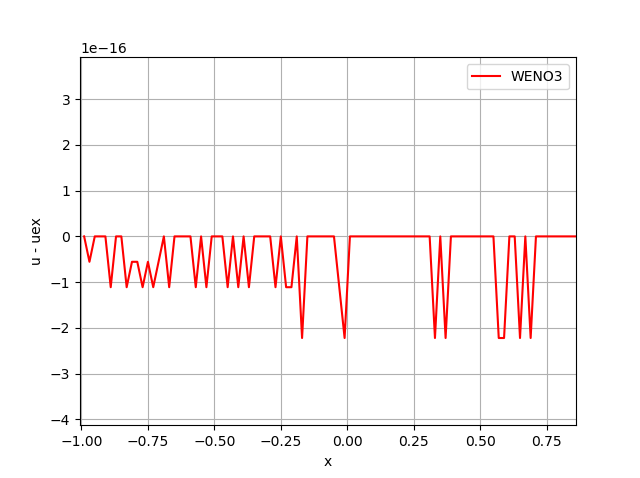}
\includegraphics[width=0.4\textwidth]{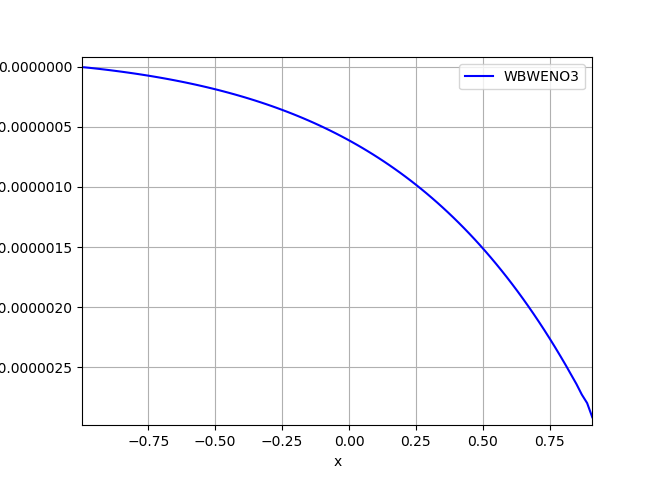}
\caption{Test \ref{ss_stBHc}:  zoom of the differences between the numerical solutions at time $t = 8$ and the stationary solution using a 200-cell mesh. Left: WBWENO3. Right: WENO3} \label{fig-stBHc-1}
\end{figure}
\begin{figure}
\centering
\includegraphics[width=0.4\textwidth]{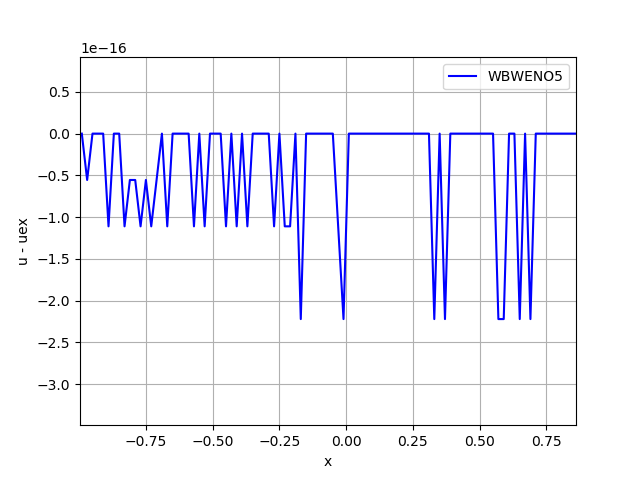}
\includegraphics[width=0.4\textwidth]{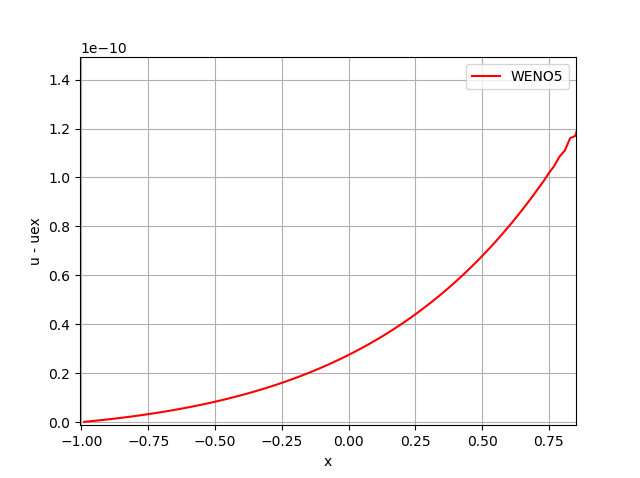}
\caption{Test \ref{ss_stBHc}:  zoom of the differences between the numerical solutions at time $t = 8$ and the stationary solution using a 200-cell mesh. Left: WBWENO5. Right: WENO5} \label{fig-stBHc-2}
\end{figure}

\begin{table}[H]
\centering
\begin{tabular}{|c|c|c|c|} \hline
 & \multicolumn{2}{|c|}{WENO3}  & {WBWENO3} \\
Cells & Error  &Order&Error  \\
\hline
100 & 1.9044E-06 & - & 8.9928E-17\\
200 & 2.4762E-07 & 2.94 & 1.4543E-16\\
400 & 3.1550E-08 & 2.97 & 1.5304E-14\\
800 & 3.9817E-09 & 2.98 & 1.6560E-14  \\
  \hline
\end{tabular}
\caption{Test \ref{ss_stBHc}. Errors in $L^1$ norm and convergence rates for WB3 and WBWENO3 at time $t = 8$.} \label{table_stBHc-1}
\end{table}

\begin{table}[H]
\centering
\begin{tabular}{|c|c|c|c|} \hline
 & \multicolumn{2}{|c|}{WENO5}  & {WBWENO5} \\
Cells & Error  &Order&Error  \\
\hline
20 & 7.7695E-07 & - &   2.2759e-16\\
40 & 3.5170E-09 & 7.78 &   1.5543e-16\\
80 & 2.0005E-10 & 4.13 &   1.1657e-16\\
160 & 1.0352E-11& 4.27&  2.9559e-16\\
  \hline
\end{tabular}
\caption{Test \ref{ss_stBHc}. Errors in $L^1$ norm and convergence rates for WENO5 and WBWENO5 at time $t = 8$.} \label{table_stBHc-2}
\end{table}

% 20 & 7.7695413791e-07 & -  & 2.27595720048e-16\\
% 40 & 3.51702377221e-09 & 7.787330545002476 &  1.55431223448e-16\\
% 80 & 2.00059441235e-10 & 4.135854467153355 &  1.16573417586e-16\\
% 160 & 1.03520421402e-11& 4.2724414138108955 & 2.95596880306e-16\\
%320  & 1.30104260698e-16 & 16.27988766099271 & 2.97782631886e-15

\subsubsection{Preservation of a stationary solution with oscillatory smooth $H$}\label{ss_stBHo}
Let us consider now \eqref{testburgers} with a function $H$ that has an oscillatory behavior:
\begin{equation} \label{H_osc}
H(x) = x + 0.1 \sin(100x),
\end{equation}
(see Figure \ref{fig-stBHo-1}). We consider again the interval $[-1,1]$ and we take as initial condition the stationary solution
$$
u(x) = e^{x + 0.1 \sin(100x)},
$$
(see Figure \ref{fig-stBHo-1}) and, as boundary conditions,  the value of this stationary solution is also imposed at the ghost cells.

\begin{figure}
\centering
\includegraphics[width=0.4\textwidth]{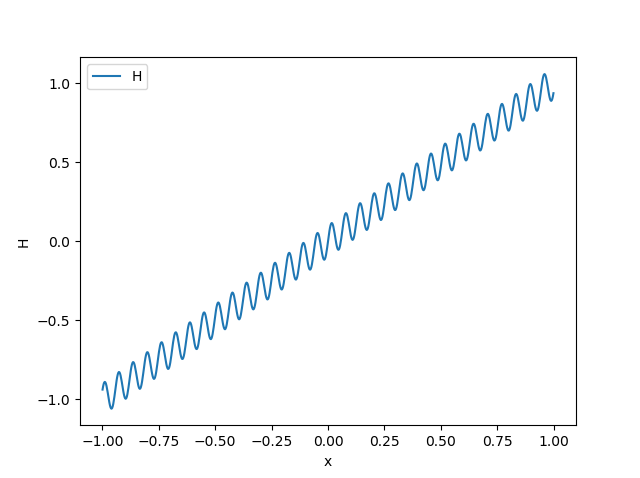}
\includegraphics[width=0.4\textwidth]{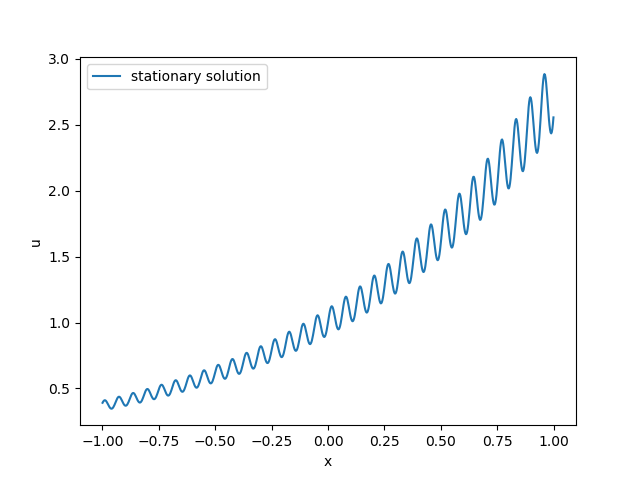}
\caption{Test \ref{ss_stBHo}:  graph of the function $H$ (left) and stationary solution (right)} \label{fig-stBHo-1}
\end{figure}

\begin{figure}
\centering
\includegraphics[width=0.4\textwidth]{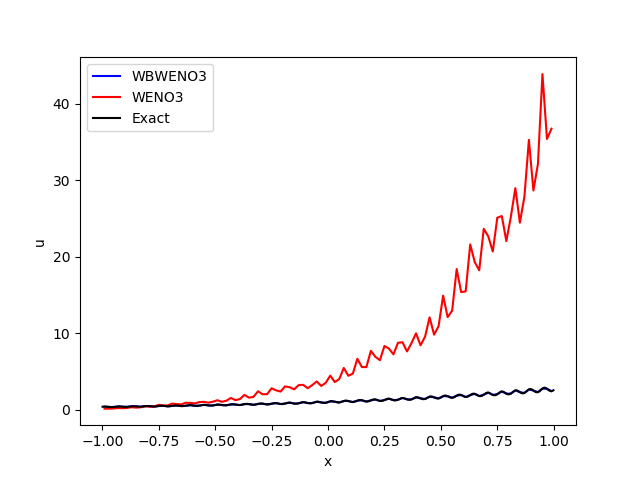}
\includegraphics[width=0.4\textwidth]{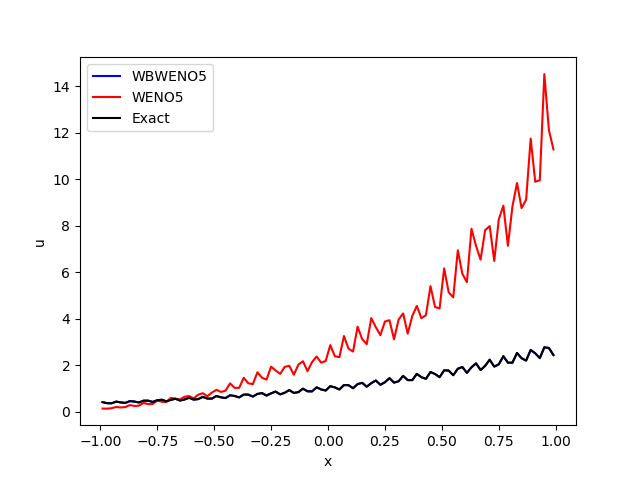}
\caption{Test \ref{ss_stBHo}:  exact solution and numerical solutions obtained at time $t = 1$ and a mesh of 100 cells. Left: WBWENO3, WENO3. Right: WBWENO5,  WENO5 \rf{(the graphs corresponding to the stationary solution and the numerical solutions obtained with WBWENO$p$ are  indistinguishable)}.} \label{fig-stBHo-2}
\end{figure}

We consider a 100-cell mesh, so that the period of the oscillations is close to $\Delta x$.
Figure \ref{fig-stBHo-2} shows the numerical solutions at time $t = 1$ corresponding to WBWENO$p$, WENO$p$, $p = 3,5$:
while the well-balanced methods preserve the stationary solution with machine precision \rf{(the graphs corresponding to the stationary solution and the numerical solutions obtained with WBWENO$p$ are  indistinguishable)}, the non well-balanced methods give a wrong numerical solution. Of course, they give more accurate solutions if the mesh is refined: see next paragraph, where the reference solution is computed using WENO3. 

\subsubsection{Perturbation of a stationary solution with oscillatory smooth $H$}\label{ss_pBHo}
We consider again Burgers' equation \eqref{testburgers} with $H$ as in \eqref{H_osc}, and an initial condition that is the stationary solution approximated in the previous test with a small perturbation
$$
u_0(x) = e^{x + 0.1 \sin(100x)} + 0.1 e^{-200(x+5)^2},
$$
(see Figure \ref{fig-pBHo-1}). If a mesh with 100 cells is used, the non well-balanced methods are unable to follow the evolution of the perturbation, since the numerical errors observed in the previous test are much larger than the perturbation. Let us see what happens when well-balanced methods are used: Figure \ref{fig-pBHo-2} shows the numerical solutions obtained with WBWENO3 and WBWENO5 and a reference solution computed with WENO3 using a mesh of 5000 cells at time $t = 1$. As it can be seen, both methods are able to follow the evolution of the perturbation. 
\begin{figure}
\centering
\includegraphics[width=0.4\textwidth]{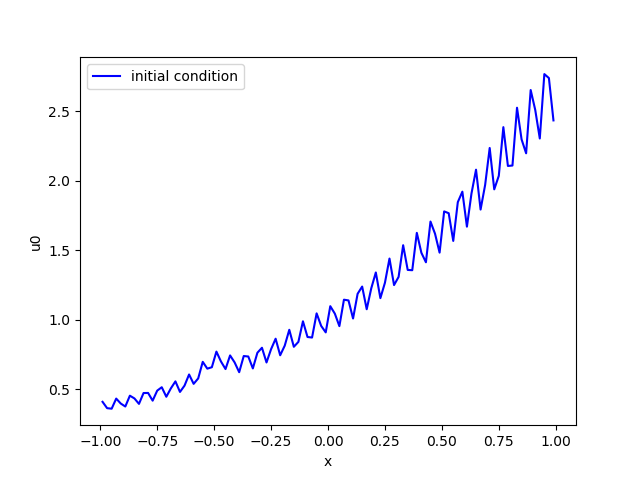}
\includegraphics[width=0.4\textwidth]{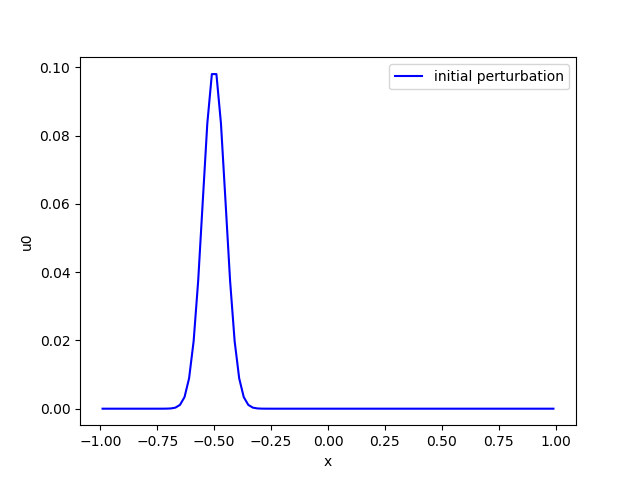}
\caption{Test \ref{ss_pBHo}:  initial condition. Left: graph. Right: difference with the stationary solution} \label{fig-pBHo-1}
\end{figure}
\begin{figure}
\centering
\includegraphics[width=0.4\textwidth]{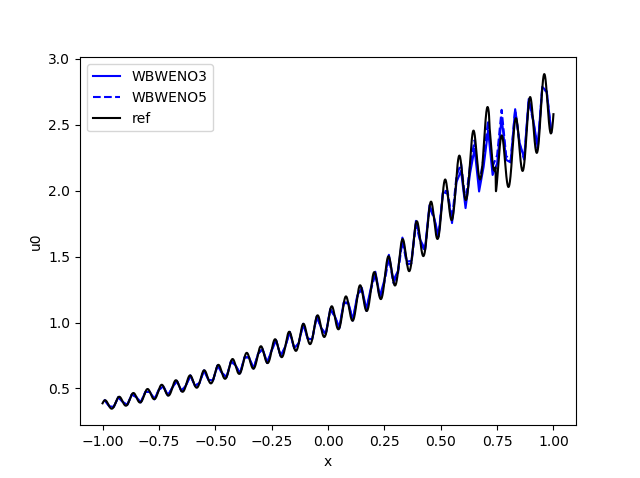}
\includegraphics[width=0.4\textwidth]{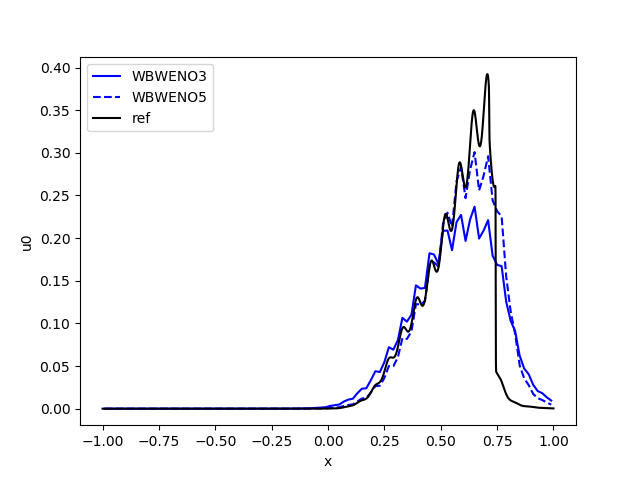}
\caption{Test \ref{ss_pBHo}:  reference and numerical solutions obtained with WBWENO3 and WBWENO5 at time $t = 1$ and a mesh of 100 cells. Left: graphs. Right: difference with the stationary solutions} \label{fig-pBHo-2}
\end{figure}

\subsubsection{Preservation of a stationary solution with piecewise continuous $H$}\label{ss_stBHd}
Let us consider now \eqref{testburgers} with a piecewise continuous function $H$:
\begin{equation} \label{H_disc}
H(x) = \begin{cases} 0.1 x & \text{ if $x \leq 0$;} \\
0.9 +x & \text{otherwise;}
\end{cases}
\end{equation}
(see Figure \ref{fig-stBHd-1}). We consider again the interval $[-1,1]$ and we take as initial condition the stationary solution
\begin{equation}\label{stsolBHd}
u(x) = \begin{cases} 
e^{0.1 x} & \text{ if $x \leq 0$;} \\
e^{0.9 +x} & \text{otherwise;}
\end{cases}
\end{equation}
(see Figure \ref{fig-stBHd-1}) and, as boundary conditions,  the value of this stationary solution is also imposed at the ghost cells. WBWENO$p$, $p=3,5$ preserve the stationary solution again with machine accuracy: see Table \ref{table_stBHD-1}.

\begin{table}[ht!]
\centering
\begin{tabular}{|c|c|c|} \hline
Cells & WBWENO3  &WBWENO5  \\
\hline
100 & 7.4984E-15 &  5.3790E-14\\
200 & 1.5432E-16 & 1.7763E-16\\
300 & 2.1464E-16 &  4.1611E-15\\
  \hline
\end{tabular}
\caption{Test \ref{ss_stBHd}. Errors in $L^1$ norm for WBWENO3 and WBWENO5 at time $t = 1$.} \label{table_stBHD-1}
\end{table}

% 7.498446308318314e-15
% 1.543210004228969e-16
% 2.1464311809419474e-16

% 5.3790305543088885e-14
% 1.776356839400252e-16
% 4.1611158962950446e-15

\begin{figure}
\centering
\includegraphics[width=0.4\textwidth]{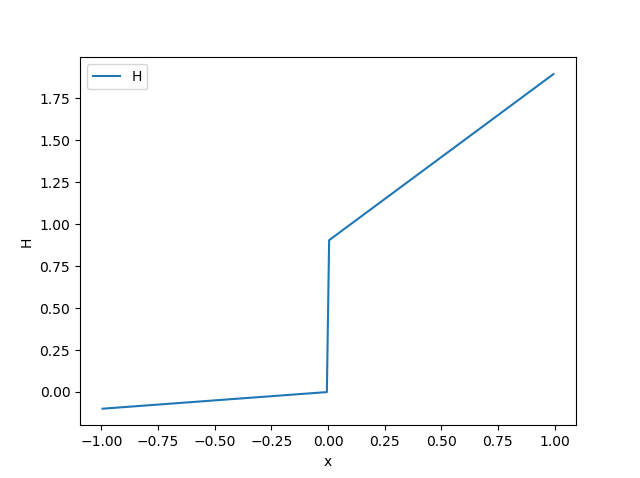}
\includegraphics[width=0.4\textwidth]{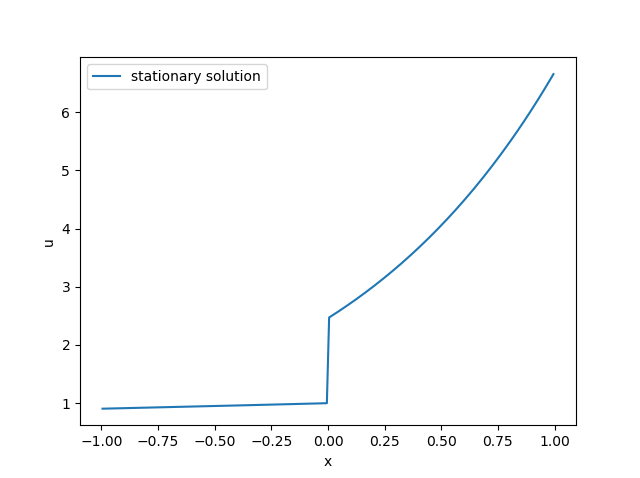}
\caption{Test \ref{ss_stBHd}:  graph of the function $H$ (left) and stationary solution (right)} \label{fig-stBHd-1}
\end{figure}

For WENO methods, the non-conservative product appearing at the source term is discretized by \eqref{sdmethsstsw}
with  two different definitions of $s_{I-1/2}$: a centered one
\begin{equation}\label{Smean}
s_{I-1/2} = s(0.5(u_{I-1} + u_I))
\end{equation}
or an upwind one
\begin{equation}\label{Supw}
   s_{I-1/2} = \left( \frac{ 1 + sign(u_{I-1/2})}{2}\right) s(u_{I-1}) + 
   \left( \frac{ 1 - sign(u_{I-1/2})}{2}\right)s( u_{I}).
\end{equation}
Here, $I-1/2$ is the index of the intercell is located and $u_{I-1/2}$ is the arithmetic mean of $u_{I-1}$ and $u_I$.

In Figure \ref{fig-stBHd-2}  we compare the exact solution with the numerical solutions obtained at time $t = 1$ obtained with 
WBWENO3 using a 300-cell mesh (its graph and the one of the exact solution are identical at the scale of the figure) and with WENO3 or WENO5 using different implementations:
\begin{itemize}
    \item WENO3-UPW1: WENO3 with upwind implementation and \eqref{Supw};
    \item WENO3-UPW2: WENO3 with upwind implementation and \eqref{Smean};
    \item WENO3-LF:  WENO3 with LF implementation and \eqref{Supw};
    \item WENO5-LF: WENO5 with LF implementation and \eqref{Supw}.
\end{itemize}
As it can be observed, the results of the WENO methods depend on the chosen implementation, on the numerical definition of the source term, and on the order. Moreover, these differences remain as $\Delta x$ tends to 0. Note that in this example, unlike those discussed so far, we have a discontinuous $H$. Therefore the term $S(U)H_x$ is not uniquely defined, and the fact that different methods converge to different solutions should not be surprising: this is in good agreement with the difficulties of convergence of finite difference methods to nonconservative systems (see \cite{CLFMP2008}). 
\begin{figure}
\centering
\includegraphics[width=0.4\textwidth]{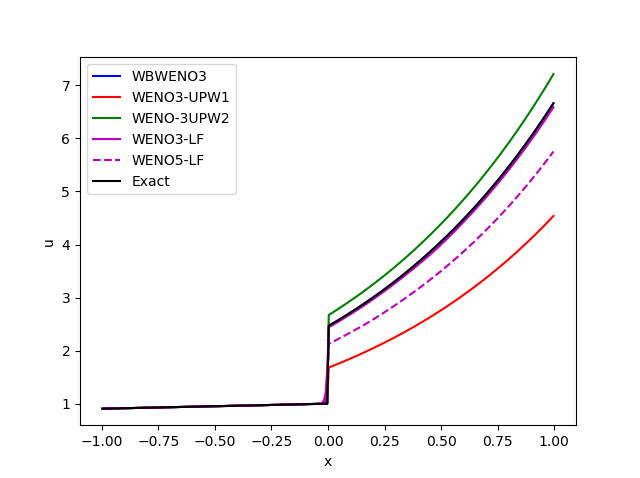}
\includegraphics[width=0.4\textwidth]{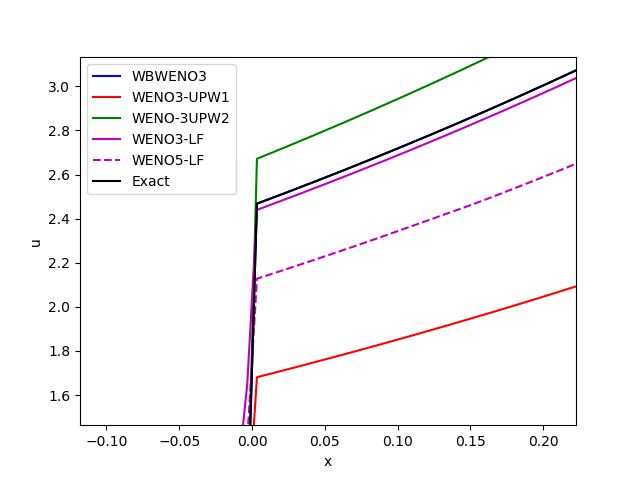}
\caption{Test \ref{ss_pBHo}: exact and numerical solutions obtained 
at time $t = 1$ with WBWENO3,WENO3-UPW1, WENO3-UPW2,WENO3-LF, WENO5-LF. Left: global view. Rigth: zoom close to the discontinuity}
 \label{fig-stBHd-2}
\end{figure}

\subsubsection{Perturbation of a stationary solution with piecewise continuous $H$}\label{ss_pBHd}
We consider again \eqref{testburgers} with \eqref{H_disc} and an initial condition that is the stationary solution approximated in the previous test with a small perturbation
$$
\tilde u_0(x) = u(x) + 0.3 e^{-200(x+5)^2},
$$
where $u$ is given by \eqref{stsolBHd}: see Figure \ref{fig-pBHd-1}. Again, WENO$p$, $p = 3,5$ are unable to follow the evolution of the perturbation, since they are not able to preserve the stationary solution.  Figure \ref{fig-pBHd-1} shows the initial condition and the numerical solutions obtained with WBWENO$p$ and WB1WENO$p$, $p =3,5$ with a mesh of 300 cells together with a reference solution computed with WBWENO3 using a mesh of 5000 cells at time $t = 0.5$. As it can be seen, while WB1WENO$p$ preserves the stationary solution (the results obtained in the previous test are indistinguishable from those obtained with WBWENO$p$), once the perturbations arrive to the discontinuity, the stationary solution is no longer preserved after its passage: see the discussion in Section \ref{s:Hdisc}.
\begin{figure}
\centering
\includegraphics[width=0.4\textwidth]{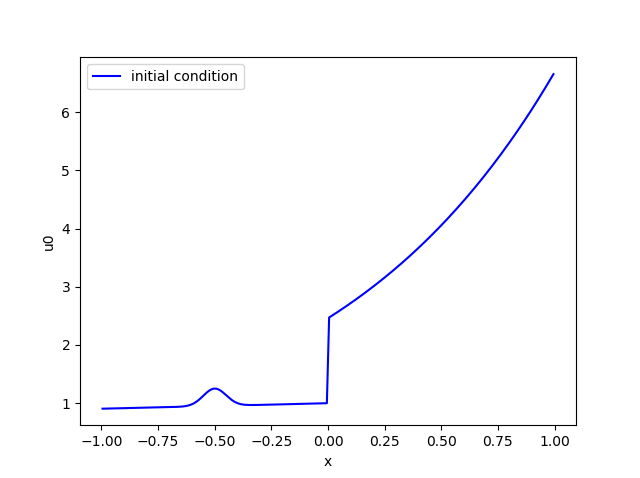}
\caption{Test \ref{ss_pBHd}:  initial condition. } \label{fig-pBHd-1}
\end{figure}

\begin{figure}
\centering
\includegraphics[width=0.4\textwidth]{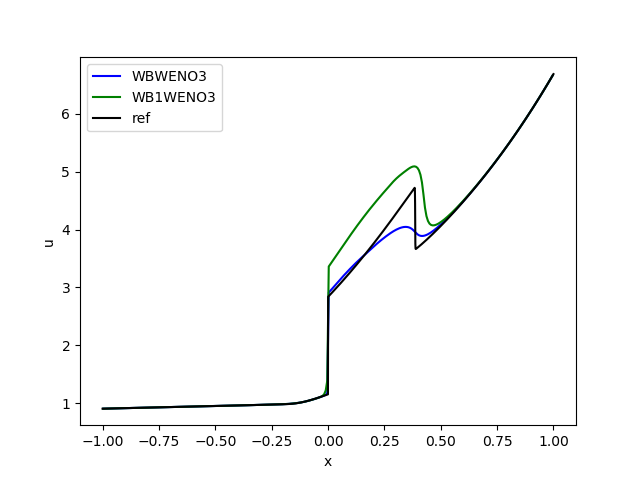}
\includegraphics[width=0.4\textwidth]{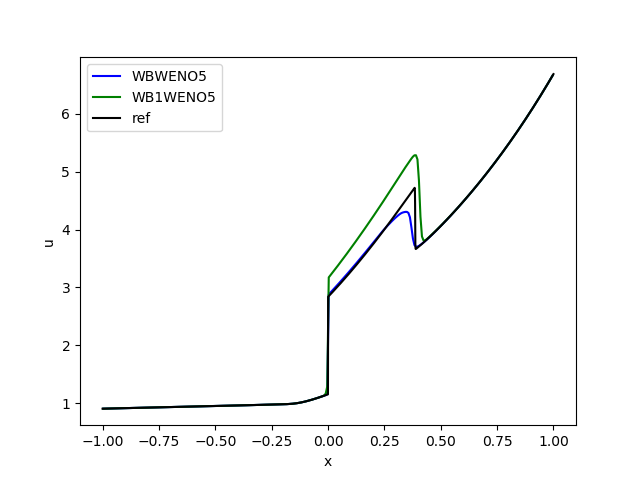}
\caption{Test \ref{ss_pBHd}:  numerical solutions obtained with WBWENO$p$ and WB1WENO$p$ at time $t = 0.5$ and a mesh of 300 cells: left $p = 3$, right $p = 5$ } \label{fig-pBHd-2}
\end{figure}

\subsection{Shallow water equations}
Five different numerical methods are considered for the shallow water system \eqref{sbl}:
\begin{itemize}
\item WENO$p$: methods of the form \eqref{sdmeth}.
\item WBWENO$p$: methods of the form \eqref{wbsdmeth} that preserve any stationary state.
\item WB1WENO$p$: methods of the form \eqref{wb1s} that preserve only one given stationary state.
\item WBWARWENO$p$: methods that preserve water at rest stationary solutions described in Section \ref{ss:wb1}.
\item WBMCWENO$p$: methods that preserve every stationary solutions and the total mass described in Section \ref{ss:mc}.
\end{itemize}

\subsubsection{Preservation of a subcritical stationary solution}\label{ss_stsubSWHc}
We consider the shallow water system with the bottom depth given by
 \begin{equation}\label{bump2}
 H(x) =  \begin{cases}
  -0.25(1 + \cos(5\pi x)) & \text{if $ -0.2 \leq x \leq 0.2$;}\\
  0 & \text{otherwise;}
 \end{cases}
 \end{equation}
and we take as initial condition the subcritical stationary
solution $(h^*, q^*)$ characterized by
$$
q^* = 2.5.\quad h^*(-3) = 2.
$$
see Figure \ref{fig-stsubSWHc-1}. Tables \ref{table_ss_stsubSWHc-3}-\ref{table_ss_stsubSWHc-5} show the errors and order of convergence of the different methods.
Figures \ref{fig-stsubSWHc-2} and \ref{fig-stsubSWHc-3} show a zoom of the differences between the numerical results obtained with a 100-cell mesh
using WENO$p$, WBWENO$p$, WBWARWENO$p$, $p = 3, 5$ at time $t = 4.$ and the exact solution. \rs{Figure \ref{fig-stsubSWHc-4} shows the numerical solutions obtained for the variable $q$}. \rf{At it can be seen, even though WBWARWENO does not capture the stationary solution with machine accuracy, the results improve those of the standard WENO.}

\begin{table}[ht!]
\centering
\begin{tabular}{|c|c|c|c|c|c|c|c|} \hline
&WB1WENO3  & WBWENO3  & WBMCWENO3 & \multicolumn{2}{|c|}  {WBWARWENO3} & \multicolumn{2}{|c|}  {WENO3}  \\
Cells & Error  & Error   & Error & Error & Order & Error & Order\\
\hline
50  & 0 & 0 &  2.7178E-15 &  4.9069E-2 & - &3.6778E-1 & - \\
100 & 3.1974E-16 & 2.6645E-17 &  2.8110E-15 & 2.3981E-2&  1.03 &   9.3955E-2 &  1.968 \\
200 & 3.8635E-16 &  2.6378E-15& 4.6629E-17 &  4.3491E-3 &  2.46 & 1.3430E-2 &  2.806 \\
400 &  8.8862E-15& 4.6629E-17 &  2.6612E-15&  5.9130E-4 & 2.8787 & 1.7931E-3& 2.904 \\
  \hline
\end{tabular}
\caption{Test \ref{ss_stsubSWHc}. Errors in $L^1$ norm and convergence rates for WB1WENO3,  WBWENO3, WBMCWENO3, WBWARWENO3, and WENO3 at time $t = 4$.} \label{table_ss_stsubSWHc-3}
\end{table}

\begin{table}[ht!]
\centering
\begin{tabular}{|c|c|c|c|c|c|c|c|} \hline
&WB1WENO5& WBWENO5 & WBMCWENO5& \multicolumn{2}{|c|} {WBWARWENO5} &\multicolumn{2}{|c|} {WENO5}  \\
Cells & Error  & Error  & Error & Error & Order  \\
\hline
50  &  0 & 0 & 0 &  3.3234E-2 & - &  4.1777E-1 & - \\
100 &   1.6546E-14 & 2.6645E-17 &  1.2789E-15&  7.5930E-3&  2.129 & 5.1077E-2& 3.031\\
200 &  3.1974E-16&   4.6629E-17&  2.5979E-15& 4.7013E-4 &   4.013& 3.8702E-3 & 3.722\\
400 &  8.6410E-13 & 4.6629E-17&  2.5579E-15 &  2.5026E-05  &  4.231 & 4.18112E-4   & 3.210\\
  \hline
\end{tabular}
\caption{Test \ref{ss_stsubSWHc}. Errors in $L^1$ norm and convergence rates for WB1WENO5,  WBWENO5, WBMCWENO5, WBWARWENO5, and WENO5 at time $t = 4$.} \label{table_ss_stsubSWHc-5}
\end{table}

\begin{figure}
\centering
\includegraphics[width=0.7\textwidth]{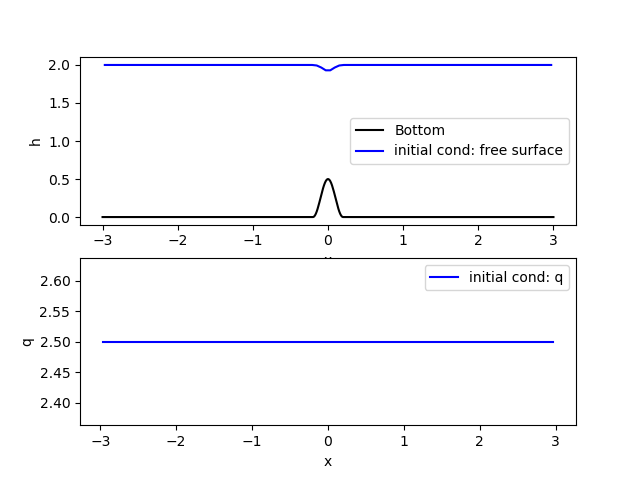}
\caption{Test \ref{ss_stsubSWHc}: initial condition: surface elevation (up) and mass-flow (down)}\label{fig-stsubSWHc-1}
\end{figure}

\begin{figure}
\centering
\includegraphics[width=0.7\textwidth]{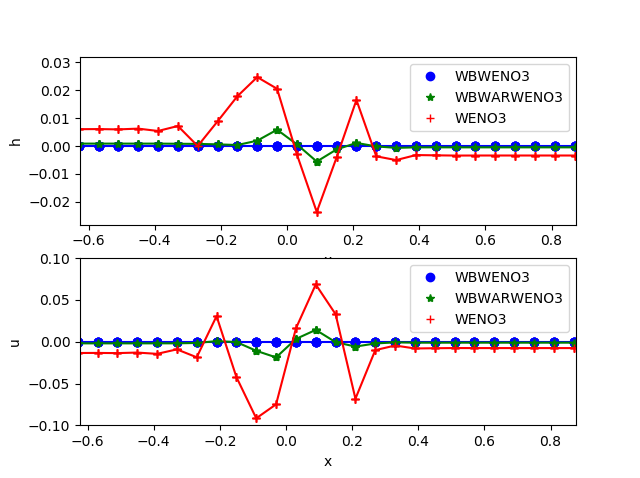}
\caption{Test \ref{ss_stsubSWHc}: Zoom of the differences between the numerical solutions obtained at time $t = 4.$ with WBWENO3, WBWARWEN3, and WENO3 using a mesh of 100 cells and the exact solution: surface elevation (up) and mass-flow (down)}\label{fig-stsubSWHc-2}
\end{figure}

\begin{figure}
\centering
\includegraphics[width=0.7\textwidth]{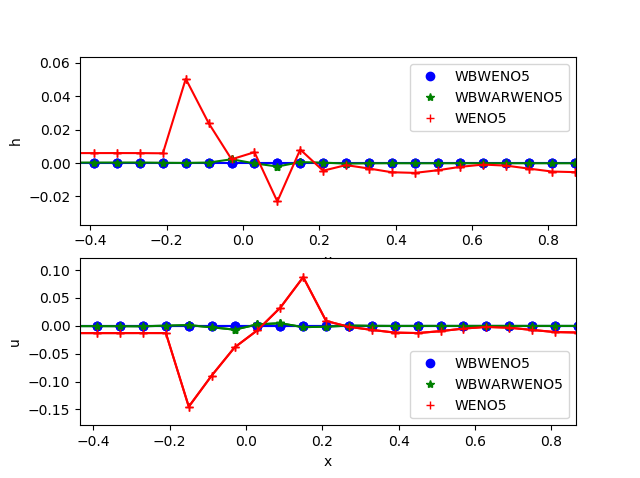}
\caption{Test \ref{ss_stsubSWHc}: Zoom of the differences between the numerical solutions obtained at time $t = 4.$ with WBWENO5, WBWARWEN5, and WENO5 using a mesh of 100 cells and the exact solution: surface elevation (up) and mass-flow (down). }\label{fig-stsubSWHc-3}
\end{figure}

\begin{figure}
\centering
\includegraphics[width=0.45\textwidth]{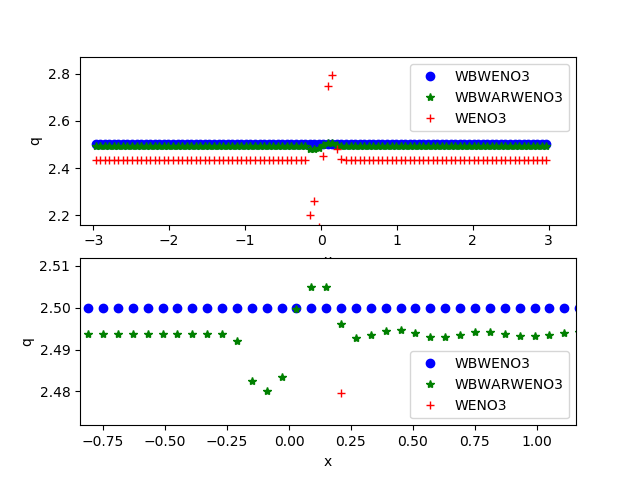}
\includegraphics[width=0.45\textwidth]{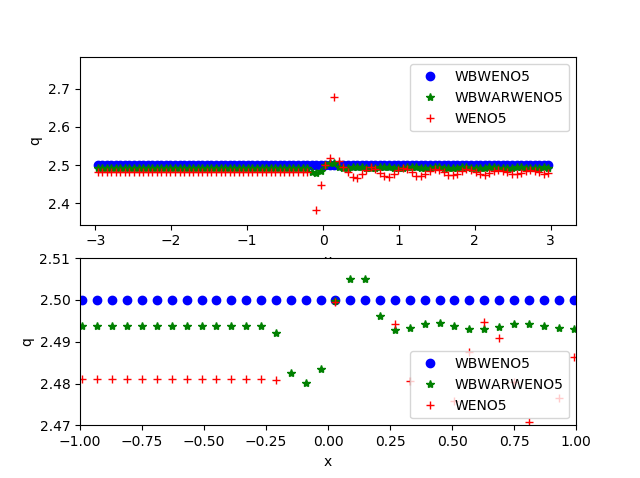}
\caption{\rs{Test \ref{ss_stsubSWHc}: Numerical results for the variable $q$ at $t = 4.$ using a mesh of 100 cells with WBWENO$p$, WBWARWEN$p$, and WENO$p$, $p$ = 3 (left) and $p=5$ (right): general view (up) and zoom close to $x = 0$ (down). } }\label{fig-stsubSWHc-4}
\end{figure}

\subsubsection{Perturbation of a subcritical stationary solution}\label{ss_psubSWHc}
In this test case, we consider an initial condition which is obtained by adding a small perturbation to the stationary solution considered
in the previous case. More precisely, a perturbation of size $\Delta h = 0.02$ is added to the thickness $h$ in the interval
$[-0.4, -0.3]$: see Figure \ref{fig-psubSWHc-1}.  Figures \ref{fig-psubSWHc-2} and \ref{fig-psubSWHc-3} show the difference between the numerical solutions
obtained with WENO$p$, WBWENO$p$, $p = 3,5$ with a 200 point mesh at time $t = 0.15$ and the stationary solution.  A reference solution has been computed with WENO3 in a mesh of 2000 cells.
\rf{Again, the solutions obtained with WBWENO$p$,  WB1WENO$p$, and WBMCWENO$p$, $p = 3,5$ are very close to each other (right figures) and WBWARWENO$p$ gives better results than WENO$p$, $p=3,5$}.

\rf{Before finishing this paragraph, let us compare the mass preservation for the  different third order numerical methods. The total mass water at time $t^n$ is computed by
$$
m_n = \Delta x \sum_i h_i^n.
$$
The numerical experiment is run until $t = 0.3$.
Since the boundary conditions are equal at both extremes of the interval and the simulation is stopped before the waves arrive at the boundaries, the total mass is expected to be preserved.  Table \ref{table_ss_masspsubSWHc} shows the maximum relative deviation of the total mass with respect to its initial value for the different numerical methods, i.e.
$$
\max_n \left| \frac{m_n - m_0}{m_0} \right|
$$
}
\begin{table}[ht!]
\centering
\begin{tabular}{|c|c|c|c|c|} \hline
WENO3 & WBWENO3  & WB1WENO3  & WBWARWENO3 & WBMCWENO3  \\
\hline
4.6319E-15 & 1.3935E-07 & 4.3331E-15 & 4.7813E-15 &  3.4366e-15 \\
  \hline
\end{tabular}
\caption{Test \ref{ss_psubSWHc}:  maximum relative deviation of the total mass.} \label{table_ss_masspsubSWHc}
\end{table}
\rf{According to the discussion in Section \ref{ss:wb1}, WBWENO3 does not preserve the total mass but even in this case the relative deviations are very small.}

\begin{figure}
\centering
\includegraphics[width=0.7\textwidth]{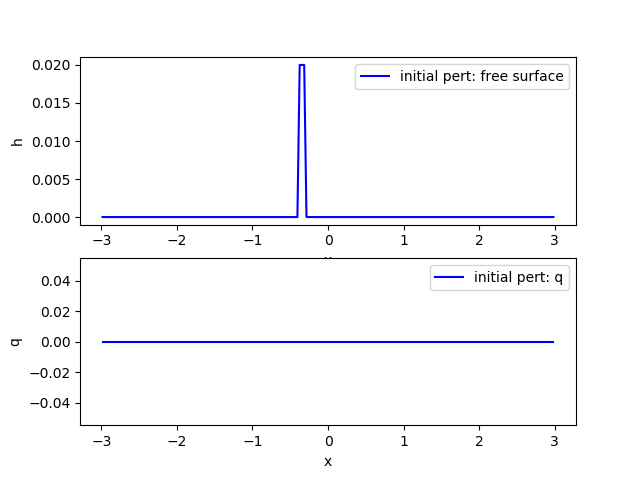}
\caption{Test \ref{ss_psubSWHc}: initial perturbation: surface elevation (up) and mass-flow (down)}\label{fig-psubSWHc-1}
\end{figure}

\begin{figure}
\centering
\includegraphics[width=0.45\textwidth]{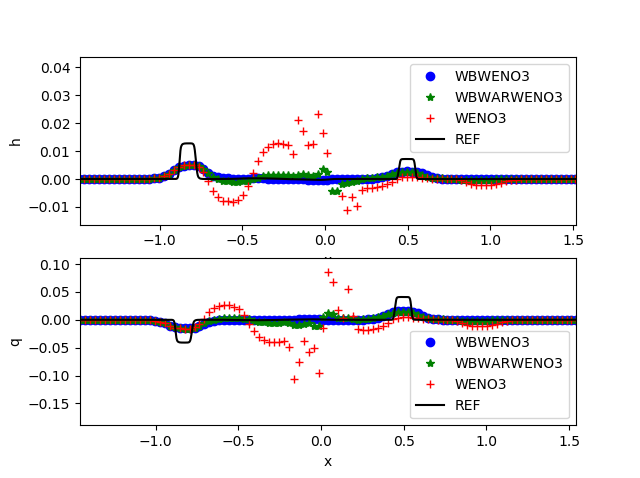}
\includegraphics[width=0.45\textwidth]{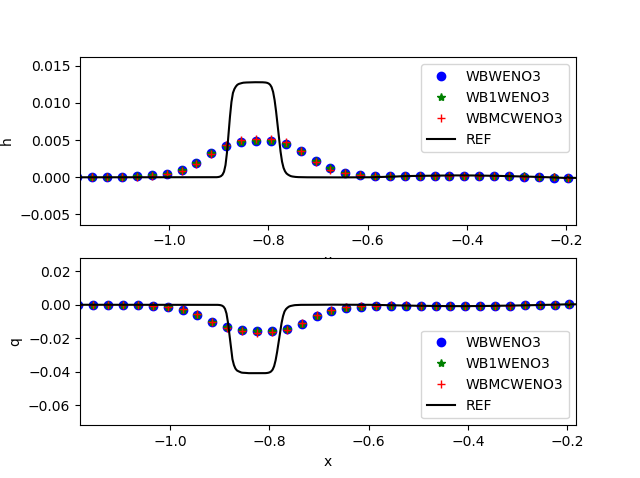}
\caption{Test \ref{ss_psubSWHc}: Zoom of the differences between the numerical solutions obtained at time $t = 0.15$ using a mesh of 200 points and the stationary solution. Left: WBWENO3, WBWARWENO3, WENO3;  surface elevation (left-up) and mass-flow (left-down). 
Right: WBWENO3, WB1WENO3, WBMCWENO3;  surface elevation (right-up) and mass-flow (right-down). }\label{fig-psubSWHc-2}
\end{figure}

\begin{figure}
\centering
\includegraphics[width=0.45\textwidth]{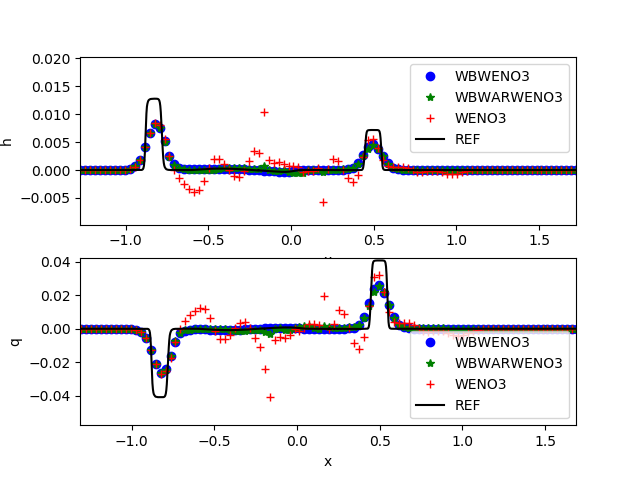}
\includegraphics[width=0.45\textwidth]{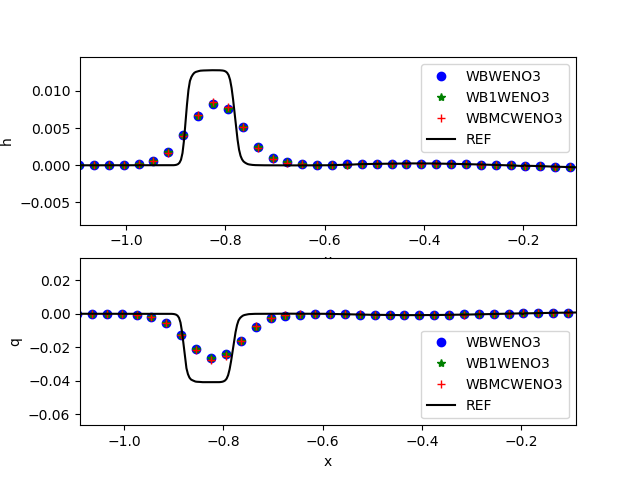}
\caption{Test \ref{ss_psubSWHc}: Zoom of the differences between the numerical solutions obtained at time $t = 0.15$ using a mesh of 200 points and the stationary solution. Left: WBWENO5, WBWARWENO5, WENO5;  surface elevation (left-up) and mass-flow (left-down). 
Right: WBWENO5, WB1WENO5, WBMCWENO5;  surface elevation (right-up) and mass-flow (right-down). }\label{fig-psubSWHc-3}
\end{figure}

\subsubsection{Preservation of a transcritical stationary solution over a discontinuous bottom}\label{ss_stsupSWHd}
We consider now a discontinuous topography given by the depth function
\begin{equation}\label{HSWdisc}
H(x) = \begin{cases} -0.25(1 + \cos(5 \pi (x+1.2))) & \text{if $-1.4 \leq x \leq -1$,}\\
1 & \text{if $x> 0$;}\\
0 & \text{otherwise}.
\end{cases}
\end{equation}

For WENO and WB1WENO methods, the equations for the neighbor nodes of the discontinuity $x_{I-1/2}$ are given by
\eqref{sdmethsstsw} and \eqref{sdmethwb1sw} respectively, with 
\begin{equation}%\label{Smean}
S_{I-1/2} = S(0.5(U_I + U_{I-1})).
\end{equation}

We take now as initial condition the transcritical admissible stationary solution characterized by:
$$
q^* = 2.5, \quad h^*(0) = \frac{(2.5)^{2/3}}{g^{1/3}}
$$
that  is subcritical at the left of $x = 0$ and supercritical at its right:
see Figure \ref{fig-stsupSWHd-1}. 
\begin{figure}
\centering
\includegraphics[width=0.7\textwidth]{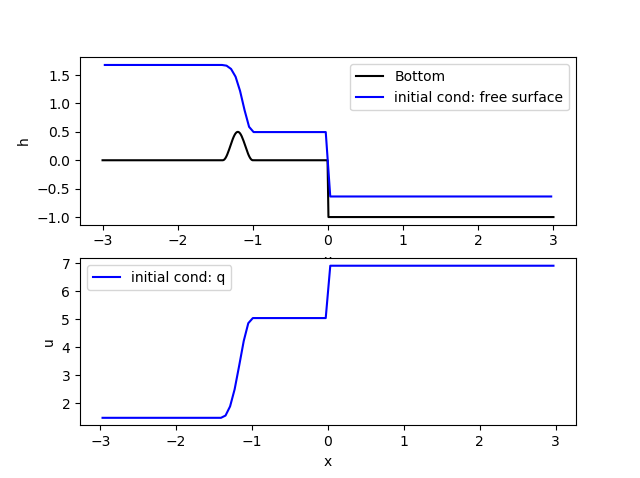}
\caption{Test \ref{ss_stsupSWHd}: initial condition: surface elevation (up) and velocity (down)}\label{fig-stsupSWHd-1}
\end{figure}

In this case, WBMCWENO$p$, $p=3,5$ are unstable.
Figures  \ref{fig-stsupSWHd-2} and \ref{fig-stsupSWHd-3} show the results obtained with WENO$p$ and WBWENO$p$,  $p = 3, 5$ using a mesh of 100 cells
at time $t = 4.$ The numerical results obtained with WB1WENO$p$ and WBMCWENO$p$,  $p = 3,5$ are indistinguishable to those of WBWENO$p$ (they are not plotted).
Table \ref{table_ss_stsupSWHd} shows the error in $L^1$ norm: according to the discussion in Section \ref{s:Hdisc}, WBWENO$p$ and WB1WENO$p$ preserve the stationary solution to machine precision, while
the solutions provided by WENO$p$ are not close to the stationary solution.

\begin{table}[ht!]
\centering
\begin{tabular}{|c|c|c|c|c|c|} \hline
WB1WENO3  & WBWENO3  &  {WENO3} & WB1WENO5  & WBWENO5  & {WENO5} \\
\hline
  7.9602E-16 &  7.9602E-16  & 1.3178 &  7.9602E-16 &   7.9602e-16  & 0.6229   \\
  \hline
\end{tabular}
\caption{Test \ref{ss_stsupSWHd}. Errors in $L^1$ norm for WB1WENO$p$,  WBWENO$p$, and WENO$p$,  $p = 3,5$ at time $t = 4$.} \label{table_ss_stsupSWHd}
\end{table}

\begin{figure}
\centering
\includegraphics[width=0.45\textwidth]{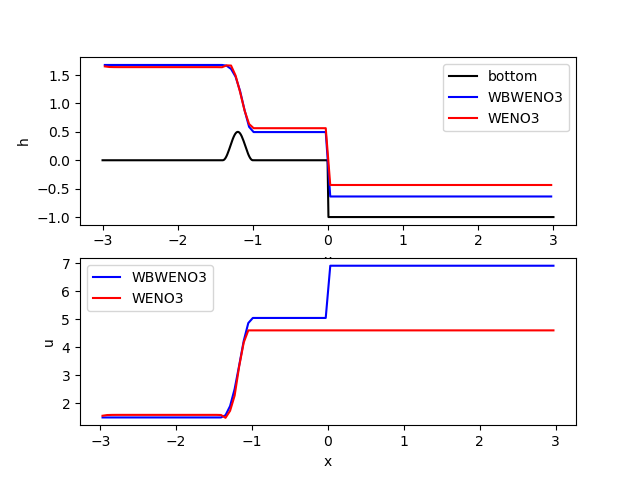}
\includegraphics[width=0.45\textwidth]{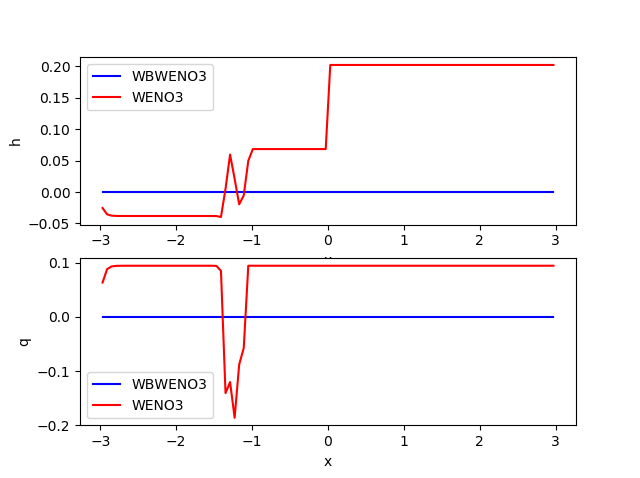}
\caption{Test \ref{ss_stsupSWHd}. Left: Numerical solutions obtained at time $t = 4.$ with WBWENO3, and WENO3 using a mesh of 100 cells: surface elevation (up) and velocity (down). Right: Difference between the numerical solutions and the stationary solution: surface elevation (up) and mass-flow (down) }\label{fig-stsupSWHd-2}
\end{figure}

\begin{figure}
\centering
\includegraphics[width=0.45\textwidth]{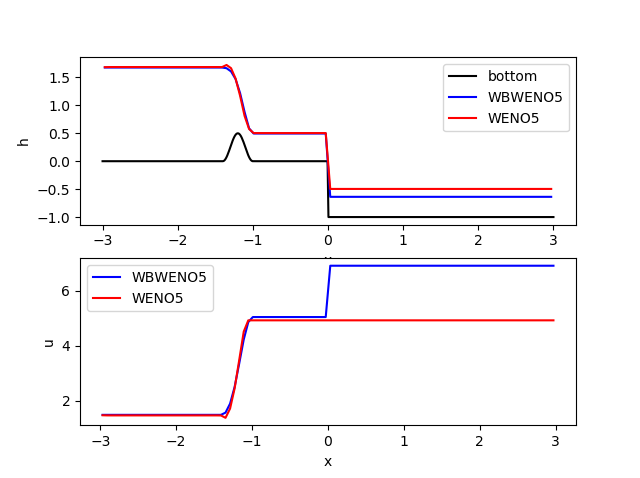}
\includegraphics[width=0.45\textwidth]{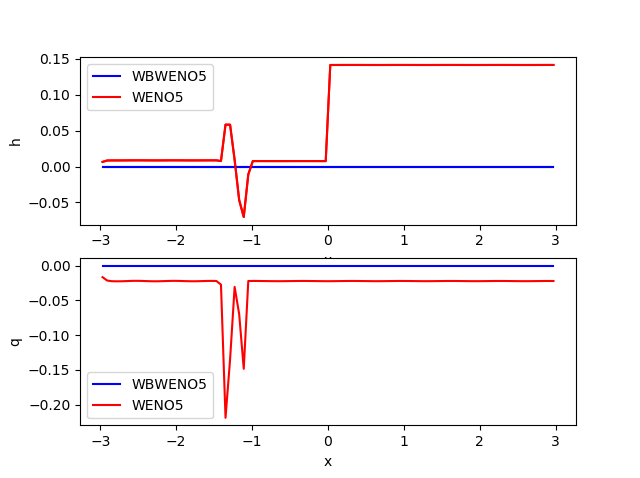}
\caption{Test \ref{ss_stsupSWHd}. Left: Numerical solutions obtained at time $t = 4.$ with WBWENO5, and WENO5 using a mesh of 100 cells: surface elevation (up) and velocity (down). Right: Difference between the numerical solutions and the stationary solution: surface elevation (up) and mass-flow (down)}\label{fig-stsupSWHd-3}
\end{figure}

\subsubsection{Perturbation of a transcritical stationary solution over a discontinuous bottom}\label{ss_ptranSWHd}
We consider now an initial condition which is obtained by adding a small perturbation to the stationary solution considered
in the previous one. More precisely, a perturbation of size $\Delta h = 0.02$ is added to the thickness $h$ in the interval
$[-0.4, -0.3]$: see Figure \ref{fig-psubSWHc-1}.  Figures \ref{fig-ptranSWHd-2} and \ref{fig-ptranSWHd-3} show the difference between the numerical solutions
obtained with WENO$p$, WBWENO$p$, WB1WENO$p$, $p = 3,5$ and the stationary solution. A reference solution has been computed with WBWENO3 in a mesh of 2000 cells. According to the discussion in Section \ref{s:Hdisc}, WB1WENO$p$
can deviate from the stationary solution once the perturbation reaches the discontinuity of $H$. Nevertheless, the differences in this case with WBWENO$p$ are relatively small: see Figure \ref{fig-ptranSWHd-2} (right) and \ref {fig-ptranSWHd-3} (right). The numerical treatment of the source term seems to add some more numerical diffusion; this can be observed clearly for $p = 5$.

\begin{figure}
\centering
\includegraphics[width=0.45\textwidth]{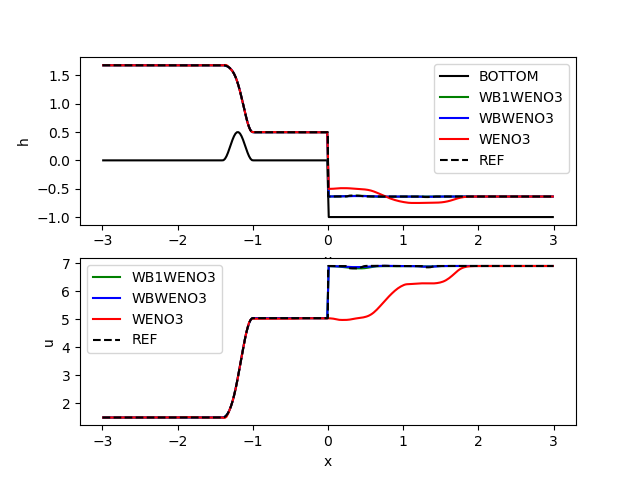}
\includegraphics[width=0.45\textwidth]{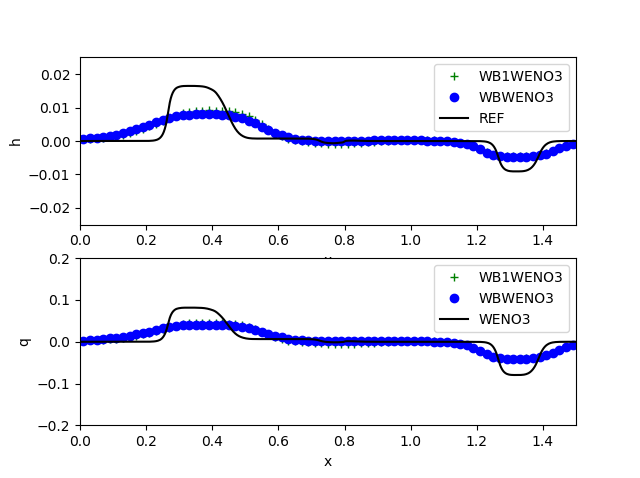}
\caption{Test \ref{ss_ptranSWHd}. Left: Numerical solutions obtained at time $t = 0.2$ with WB1WENO3, WBWENO3, and WENO3 using a mesh of 300 cells: surface elevation (up) and velocity (down). Right: Difference between the numerical solutions obtained with WB1WENO3 and WBWENO3 and the stationary solution: surface elevation (up) and mass-flow (down)}\label{fig-ptranSWHd-2}
\end{figure}

\begin{figure}
\centering
\includegraphics[width=0.45\textwidth]{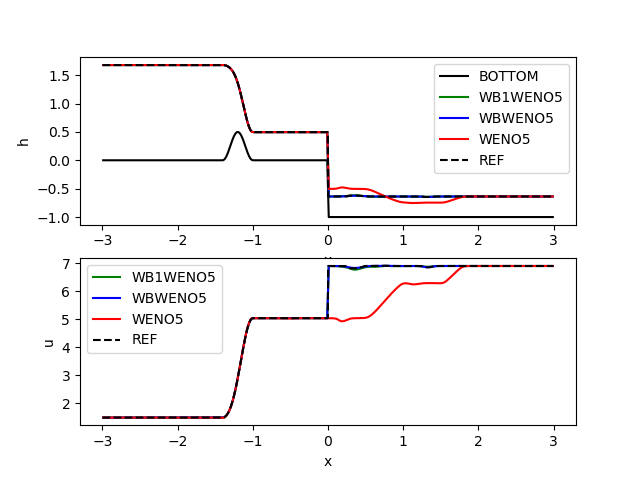}
\includegraphics[width=0.45\textwidth]{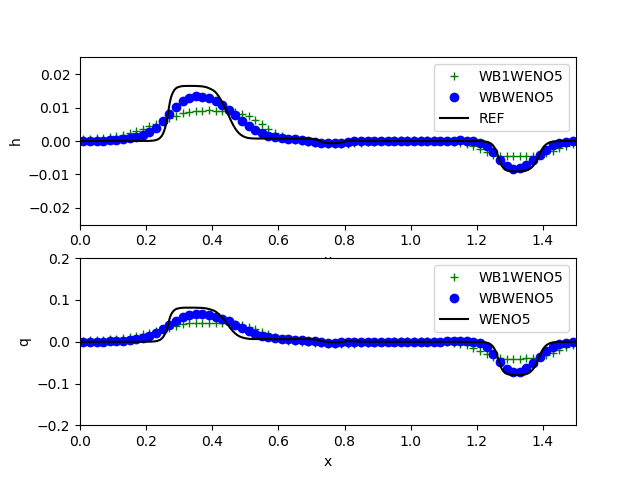}
\caption{Test \ref{ss_ptranSWHd}. Left: Numerical solutions obtained at time $t = 0.2$ with WB1WENO5, WBWENO5, and WENO5 using a mesh of 300 cells: surface elevation (up) and velocity (down). Right: Difference between the numerical solutions obtained with WB1WENO5 and WBWENO5 and the stationary solution: surface elevation (up) and mass-flow (down)}\label{fig-ptranSWHd-3}
\end{figure}

\subsubsection{Mass conservation and computational cost}\label{ss_massSWHc}
In order to measure the mass conservation properties of the different methods and  compare the computational cost, we consider now the depth function
$$
 H(x)  = \begin{cases}  0.13+0.05(x-10)^2 & \text{if  $8 \leq x \leq 12$;}\\
 0.33 & \text{otherwise.}
 \end{cases}
$$
and the initial condition
$$
h_0(x) = h^*(x) + 0.5 \chi_{[5,7]}, \quad q_0(x) = 1,
$$
where $h^*(x)$ is the thickness corresponding to the stationary solution characterized by
$$
q^* = 1, \quad h^*(10) = 1,
$$
and $\chi_{[a,b]}$ denotes the characteristic function of an interval $[a,b]$: see Figure \ref{fig-massSWHc-1}. The computational domain is the interval
$[-10, 30]$ and the boundary conditions are $h = h^*$, $q = 1$ at both extremes. The simulation is run until time $t = 2.5$. Figure \ref{fig-massSWHc-2} shows the results obtained with WBWENO3 and WBMCWENO3 at time $t = 2.5$ using a mesh of 200 cells. In this case, although WBMCWENO3 is not unstable, the results are oscillatory even though the flow is always subcritical. 

The total mass water at time $t^n$ is computed by
$$
m_n = \Delta x \sum_i h_i^n.
$$
Since the boundary conditions are equal at both extremes of the interval and the simulation is stopped before the waves arrive at the boundaries, the total mass is expected to be preserved.  Table \ref{table_ss_massSWHc} shows the maximum relative deviation of the total mass with respect to its initial value for the different third order numerical methods, i.e.
$$
\max_n \left| \frac{m_n - m_0}{m_0} \right|
$$
\begin{table}[ht!]
\centering
\begin{tabular}{|c|c|c|c|c|} \hline
WENO3 & WBWENO3  & WB1WENO3  & WBWARWENO3 & WBMCWENO3  \\
\hline
8.1062E-15 & 9.5985E-06 & 7.3825E-15 & 8.54056E-15 &  7.3825E-15 \\
  \hline
\end{tabular}
\caption{Test \ref{ss_massSWHc}:  maximum relative deviation of the total mass.} \label{table_ss_massSWHc}
\end{table}
According to the discussion in Section \ref{ss:wb1}, WBWENO$p$, $p=3,5$ do not preserve the total mass. Figure \ref{fig-massSWHc-3} shows the variation with time of these deviations for WBWENO3 and WBWENO5: as it can be seen, these variations remain small and get stabilized after some time.
\begin{figure}
\centering
\includegraphics[width=0.7\textwidth]{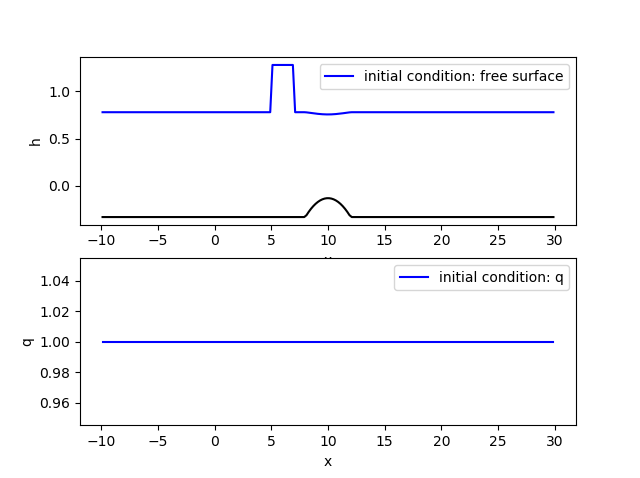}
\caption{Test \ref{ss_massSWHc}: initial condition: surface elevation (up) and mass-flow (down)}\label{fig-massSWHc-1}
\end{figure}

\begin{figure}
\centering
\includegraphics[width=0.7\textwidth]{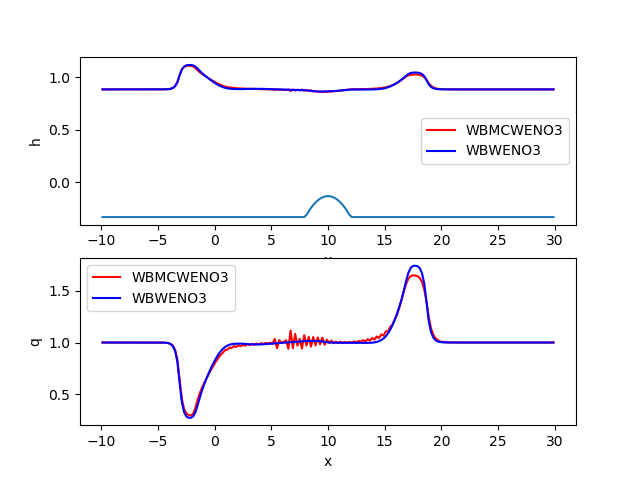}
\caption{Test \ref{ss_massSWHc}: Numerical solutions obtained at time $t = 2.5$ with WBWENO3 and WBMCWENO3 using a mesh of 200 cells: surface elevation (up) and mass-flow (down)}\label{fig-massSWHc-2}
\end{figure}

\begin{figure}
\centering
\includegraphics[width=0.7\textwidth]{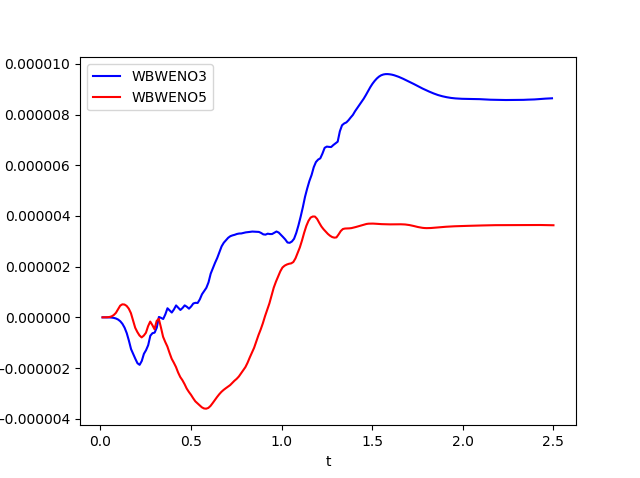}
\caption{Test \ref{ss_massSWHc}: Evolution of the relative deviation of the total mass with time for WBWENO3 and WBWENO5}\label{fig-massSWHc-3}
\end{figure}

Finally, we compare the computational cost of the different numerical methods in this test case. If the only stationary solution to be preserved is computed once before the time loop, the computational  cost of WENO$p$ and WB1WENO$p$ is similar. Table \ref{table_ss_massSWHc_cpu} compares the averaged CPU times of 10 runs using WBWENO$p$, WBWARWENO$p$  and WENO$p$, $p =3,5$. Each row of the table shows the averaged CPU time corresponding to WBWENO$p$ and WBWARWENO$p$ divided by the one corresponding to WENO$p$. Figure \ref{fig-massSWHc-4} shows the graph of the CPU time as a function of 
$\log_2(N)$. As can be seen, the use of the well-balanced technique that allows one to capture every stationary solution increases the CPU time by a factor of
4-5, while the use of the one that only preserves water at rest solutions multiply the computational cost by 1.5 approximately. 
\begin{table}[ht!]
\centering
\begin{tabular}{|c|c|c|c|c|} \hline
Cells & WBWARWENO3  & WBWENO3  &  WBWARWENO5& WBWENO5  \\
\hline
50 & 1.398 & 4.920 & 1.474 & 5.166 \\
100 & 1.453 &  4.695 & 1.534 &   4.895 \\
200 & 1.430  &  4.121 &  1.568 &  4.573\\
400 & 1.354 &  3.604 &  1.492 &  4.138\\
800 & 1.438 & 4.057 &  1.536 & 4.027\\
  \hline
\end{tabular}
\caption{Test \ref{ss_massSWHc}.  CPU times corresponding to WBWARWENO$p$ and WBWENO$p$ divided by the one corresponding to WENO$p$ using meshes of $N = 50$, 100, 200, 400 and 800 cells.} \label{table_ss_massSWHc_cpu}
\end{table}

\begin{figure}
\centering
\includegraphics[width=0.7\textwidth]{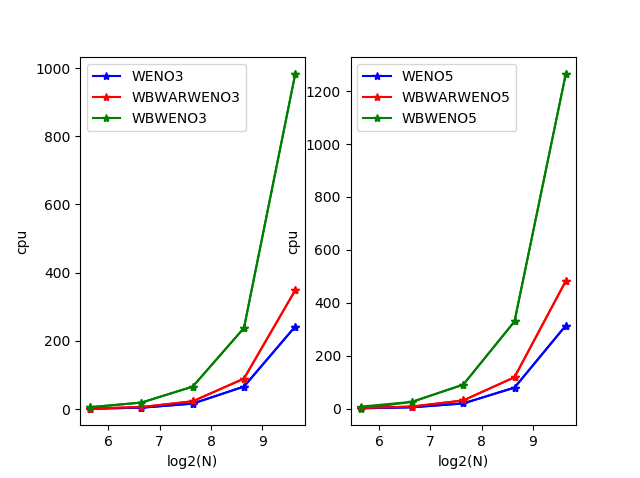}
\caption{Test \ref{ss_massSWHc}: CPU times as a function of $\log_2(N)$ for WENO$P$, WBWARWENO$p$ and WBWENO$p$, $p = 3, 5$}\label{fig-massSWHc-4}
\end{figure}

\section{Conclusions}

Two families of well-balanced high-order finite difference numerical methods have been introduced: one of them preserves every stationary solution while the second preserves a prescribed family of stationary solutions, which can be constituted by a single element. The accuracy and the well-balanced properties of the methods have been analyzed. 

The methods are first introduced in the more simple case in which the source terms do not  involve nonconservative products and the eigenvalues of the Jacobian cannot vanish. Then, the extension of the methods to more complex cases has been discussed: cases in which the stationary solutions are not explicitly known, the bottom is discontinuous, or the eigenvalues can vanish have been addressed. The extension to multidimensional problems is also briefly discussed.

The methods have been applied to a number of numerical tests related to the linear transport equation with linear source term, Burgers' equation with nonlinear source terms, and the shallow water equations. For this latter case, numerical methods that preserve only water  at rest stationary solutions or every stationary solution have been derived. 
A challenging test in which a transcritical solution of the shallow water over a discontinuous bottom is perturbed has been considered. The following conclusions can be drawn:

\begin{itemize}
    \item For test cases in which there is only one known stationary solution involved, when $H$ is continuous, the methods that preserve either all the stationary solutions or only one give essentially the same results. 
    \item When $H$ has discontinuities, both families preserve admissible discontinuous stationary solutions, but the methods that only preserve one stationary solution may fail when this solution is perturbed.
    \item The methods reduce to conservative schemes when the source term vanishes.
    \item  When the system contains conservation laws, the numerical methods are  not in general conservative for them, with the exception of those that preserve only one stationary solution. 
    \item In the particular case of the shallow water model, it has been shown that the numerical methods that preserve water at rest  solutions for the shallow water equations are conservative for the mass equation. Nevertheless we have not been able to find  high-order numerical methods that preserve every stationary solution, that are conservative for the mass equation and stable regardless the regime of the flow:  only partial solutions have been found.
    \item The numerical methods that  preserve a family of stationary solutions are computationally less expensive.
    
\end{itemize}

The derivation of stable high-order finite difference methods that preserve every stationary solution and are conservative for the conservation laws included in the  system remains a challenge, both for the shallow water model and general systems of balance laws.

The main difficulty in applying the methods introduced here to arbitrary systems of balance laws is related to the numerical resolution of a Cauchy problem whose ODE system is not in normal form.  Further extensions would include the application to general systems by solving numerically the Cauchy problems
following the lines discussed in Section \ref{ss:Cauchynum}.

As it has been mentioned in Section \ref{ss:2d}, the strategy introduced here to design numerical methods that preserve only one known stationary solution can be easily extended to multidimensional problems. Nevertheless, the design of well-balanced methods that preserve general stationary solutions for multidimensional problems remains a major challenge.  We hope that the application of numerical methods for solving the Cauchy problems for general 1d problems will give some hints to tackle this challenge, although the boundary value problems to be solved will be related to nonlinear PDEs in that case.


\begin{thebibliography}{100}

\bibitem{Audusse04} E. Audusse, F. Bouchut, M. O. Bristeau, R. Klein, B. Perthame.
\newblock{A fast and stable well-balanced scheme with hydrostatic reconstruction for shallow water flows.}
\newblock {\em SIAM Journal on Scientific Computing}, 25: 2050–- 2065 (2004).


\bibitem{Berberich19} J.~ P.~ Berberich, P.~ Chandrashekar, C.~ Klingenberg.
\newblock High order well-balanced finite volume methods for multi-dimensional systems of hyperbolic balance laws.
\newblock arXiv:1903.05154 (2019)


\bibitem{BV94}
A.~Berm\'udez and M.~E. V\'azquez.
\newblock Upwind methods for hyperbolic conservation laws with source terms.
\newblock {\em Computers \& Fluids}, 23(8):1049--1071 (1994).

\bibitem{Berthon}
C. Berthon, C. Chalons.
\newblock A fully well-balanced, positive and entropy-satisfying Godunov-type method for the shallow-water equations.
\newblock {\em Mathematics of Computation}, 85: 1281-1307, (2016).

\bibitem{Bouchut04}
F. Bouchut.
\newblock{Non-linear stability of finite volume methods for hyperbolic conservation laws and
well-balanced schemes for sources}.
\newblock{\em Frontiers in Mathematics}, Birkhauser, 2004.

\bibitem{BouchutMorales}
\newblock F. Bouchut, T. Morales, A subsonic-well-balanced reconstruction scheme for shallow water
flows.
\newblock {\em SIAM Journal on Numerical Analysis}, 48:1733–1758, (2010).


\bibitem{Canestrelli09} A.~Canestrelli, A.~Siviglia, M.~Dumbser, E.~F.~Toro.
\newblock Well-balanced high-order centred schemes for non-conservative hyperbolic systems. Applications to shallow water equations with fixed and mobile bed.
\newblock{\em Advances in Water Resources}, 32(6): 834--844 (2009).

\bibitem{Caselles09}
V.~Caselles, R.~Donat, G.~Haro.
\newblock Flux-gradient and source-term balancing for certain high resolution shock-capturing schemes.
\newblock {\em Computers \& Fluids}, 38:16--36 (2009).



\bibitem{sinum2008}
M.~J. Castro, J.M.~Gallardo  J.~L\'opez, and C.~Par\'es.
\newblock Well-balanced high order extensions of {G}odunov method for linear
  balance laws.
 \newblock {\em SIAM Journal on Numerical Analysis}, 46:1012--1039 (2008)
 
\bibitem{CastroGomezPares}
M.~J. Castro, I.~G\'omez-Bueno, C. Par\'es.
\newblock High-order well-balanced methods for systems of balance laws: a control-based approach.
\newblock Submitted (2020).

 
 \bibitem{CLFMP2008} M.~J. Castro, P.G. LeFloch, M.L. Mu\~noz-Ruiz, C. Par\'es.
\newblock Why many theories of shock waves are necessary: Convergence error in formally path-consistent schemes.
\newblock {\em Journal of Computational Physics}, 227: 8107-8129, (2008).

\bibitem{lopez2013}
M.~J. Castro, J.~A. L\'opez-Garc\'ia, and C.~Par\'es.
\newblock High order exactly well-balanced numerical methods for shallow water
  systems.
\newblock {\em Journal of Computational Physics}, 246:242--264 (2013).

\bibitem{handbook} M.~J.~Castro, T.~Morales, C.~Par\'es.
\newblock Well-balanced  schemes and path-conservative numerical methods.
\newblock {\em Hand book of Numerical Analysis}, 18: 131 -- 175 (2017).

\bibitem{SWesfera} M.~J. Castro, S.~Ortega, C.~ Par\'es.
\newblock Well-balanced methods for the shallow water equations in spherical coordinates
\newblock{\em Computers \& Fluids}, 157: 196--207 (2017).

\bibitem{CastroPardoPares} M. J. Castro, A. Pardo, C. Par\'{e}s.
\newblock Well-balanced numerical schemes based on a generalized
hydrostatic reconstruction technique.
\newblock {\em Mathematical Models and Methods in Applied Sciences},
17: 2065–- 2113(2007).

\bibitem{CastroPares2020} M.J. Castro, C.~ Par\'es.
\newblock Well-balanced high-order finite volume methods for systems of balance laws
\newblock {\em Journal of Scientific Computing}, 82, Article number: 48 (2020). 



\bibitem{Chandrashekar15}
P.~Chandrashekar and C.~Klingenberg.
\newblock A second order well-balanced finite  volume scheme for Euler equations with gravity. 
\newblock {\em SIAM  J.  Sci.  Comput.},37(3):B382--B402 (2015).

\bibitem{Chandrashekar17}
P.~Chandrashekar  and  M.~Zenk.
\newblock Well-balanced  nodal  discontinuous Galerkin  method  for  Euler  equations  with  gravity.
\newblock {\em J.  Sci.  Comput.},71(3):1062--1093 (2017).

\bibitem{Cheng}
Y. Cheng, A. Chertock, M. Herty, A. Kurganov, T. Wu.
\newblock A new approach for designing moving-water equilibria preserving schemes for the shallow water equations.
\newblock {\em Journal of Scientific Computing}, 80: 538--554, (2019).

\bibitem{Chertock}
A. Chertock, S. Cui, A. Kurganov, T. Wu.
\newblock{Well‐balanced positivity preserving central‐upwind scheme for the shallow water system with friction terms}.
\newblock {\em International  Journal for Numerical Methods in Fluids}, 78:355–- 383 (2015). 


\bibitem{Chertock18}
A.~Chertock, S.~Cui, A.~Kurganov, S.\"Ozcan, E.~Tadmor. 
\newblock Well-balanced schemes for the Euler equations with gravitation: Conservative formulation using global fluxes.
\newblock {\em J. Comput. Phys.}, 358:36--52 (2018).

\bibitem{ChertockRIPA} 
A. Chertock, A. Kurganov,  Y. Liu.
\newblock Central-upwind schemes for the system of shallow water equations with horizontal temperature gradients.
\newblock {\em Numerische Mathematik}, 127: 595–-639 (2014).

\bibitem{DalMaso95}
G.~Dal~Maso, P.~G. Lefloch, and F.~Murat.
\newblock Definition and weak stability of nonconservative products.
\newblock {\em Journal de Math\'ematiques Pures et Appliqu\'ees},
  74(6):483--548 (1995).


\bibitem{Donat10}
R.~Donat, A. Mart\'{\i}nez-Gavara.
\newblock Hybrid Second Order Schemes for Scalar Balance Laws
\newblock {\em Journal of Scientific Computing volume} 48: 52–-69(2011)


\bibitem{Gaburro17}
E.~Gaburro, M.~Dumbser, M.~J.~Castro.
\newblock Direct Arbitrary-Lagrangian-Eulerian finite volume schemes on moving nonconforming unstructured meshes.
\newblock{\em Computers \& Fluids}, 159: 254--275 (2017).

\bibitem{Gaburro18} E.~Gaburro, M.~J.~Castro, M.~Dumbser.
\newblock Well-balanced Arbitrary-Lagrangian-Eulerian finite volume schemes on moving nonconforming meshes for the Euler equations of gas dynamics with gravity.
\newblock{\em Monthly Notices of the Royal Astronomical Society}, 477(2): 2251-- 2275 (2018).

\bibitem{Gaburro18-2} E.~ Gaburro, M. J. Castro, M. Dumbser.
\newblock A well balanced diffuse interface method for complex nonhydrostatic free surface flows,
\newblock{\em Computers \& Fluids}, 175: 180--198 (2018).


\bibitem{Gascon}
L.~Gasc\'on, J.M.~Corber\'an.
\newblock Construction of second-order TVD schemes for nonhomogeneous hyperbolic conservation laws. 
\newblock {\em Journal of Computational Physics} 172: 261–-297 (2001)


\bibitem{Gottlieb98}
S.~Gottlieb and C.-W. Shu.
\newblock Total variation diminishing {Runge}-{Kutta} schemes.
\newblock {\em Mathematics of Computation of the American Mathematical
  Society}, 67(221): 73--85 (1998).
  
\bibitem{Grosheintz19} L.~Grosheintz-Laval, R.~K\"appeli,
\newblock High-order well-balanced finite volume schemes for the Euler equations with gravitation.
\newblock{\em  Journal of Computational Physics}, 378: 324--343 (2019).

% \bibitem{Hairer}
% E. Hairer, C. Lubich, G. Wanner.
% \newblock Geometric Numerical Integration. Structure-Preserving Algorithms for Ordinary Differential Equations. 
% \newblock Springer-Verlag Berlin Heidelberg (2002).



\bibitem{JiangShu}  G. Jiang, C.W.-Shu.
\newblock  Efficient implementation of weighted ENO schemes.
\newblock {\em Journal of Computationas Physcs}, 126: 202–-228 (1996). 



\bibitem{Kappeli14}
R.~K\"appeli and S.~Mishra.  
\newblock Well-balanced schemes for the Euler equations with gravitation.
\newblock{\em J. Comput. Phys.}, 259:199--219 (2014).



\bibitem{Klingenberg18}
C.~Klingenberg, G.~Puppo and M.~Semplice.
\newblock Arbitrary order finite volume well-balanced schemes for the Euler equations with gravity. 
\newblock https://arxiv.org/abs/1807.02341 (2018).




\bibitem{Li16}
G.~Li and Y.~Xing. 
\newblock High order finite volume WENO schemes for the Euler equations under gravitational fields.
\newblock {\em J. Comput. Phys.}, 316:145--163 (2016).

\bibitem{Li18}
G.~Li  and  Y.~Xing.
\newblock  Well-balanced  discontinuous  Galerkin  methods  with hydrostatic  reconstruction  for  the  Euler  equations  with  gravitation. 
\newblock {J.Comput. Phys.}, 352:445--462 (2018).

\bibitem{Lukacova}
M.~Luk\'a{\v{c}}ov\'a-Medvid'ov\'a, S.~Noelle, and M.~Kraft.
\newblock Well-balanced finite volume evolution {Galerkin} methods for the
  shallow water equations.
\newblock {\em Journal of Computational Physics}, 221(1):122--147, (2007).



\bibitem{venas}
L.~O. M\"uller, C.~Par\'es, and E.~F. Toro.
\newblock Well-balanced {High}-order {Numerical} {Schemes} for
  {One}-dimensional {Blood} {Flow} in {Vessels} with {Varying} {Mechanical}
  {Properties}.
\newblock {\em Journal of Computational Physics}, 242:53--85, (2013).

\bibitem{Noelle06}
S.~Noelle, N.~Pankratz, G.~Puppo, and J.~R. Natvig.
\newblock Well-balanced finite volume schemes of arbitrary order of accuracy
  for shallow water flows.
\newblock {\em Journal of Computational Physics}, 213(2):474--499 (2006).

\bibitem{Noelle07}
S.~Noelle, Y.~Xing, and C.-W. Shu.
\newblock High-order well-balanced finite volume {WENO} schemes for shallow
  water equation with moving water.
\newblock {\em Journal of Computational Physics}, 226(1):29--58 (2007).


\bibitem{Pares06}
C.~Par{\'e}s.
\newblock Numerical methods for nonconservative hyperbolic systems: a
  theoretical framework.
\newblock {\em SIAM Journal on Numerical Analysis}, 44:300--321 (2006).

\bibitem{ParesMunoz}
C.~Par\'es, M.L. Mu\~noz.
\newblock On some difficulties of the numerical approximation of nonconservative hyperbolic systems.
\newblock {\em SEMA Journal}, 23--52 (2009).
  
\bibitem{Russo}
G.~Russo and A.~Khe.
\newblock High order well balanced schemes for systems of balance laws.
\newblock In {\em Hyperbolic problems: theory, numerics and applications},
  volume~67 of {\em Proc. {Sympos}. {Appl}. {Math}.}, pages 919--928. Amer.
  Math. Soc., Providence, RI (2009).

\bibitem{RIPA}
C. Sánchez-Linares, T. Morales de Luna, M.J. Castro.
\newblock A HLLC scheme for Ripa model.
\newblock {\em Applied Mathematics and Computation}, 272: 369--384, (2016).



\bibitem{Shu88}
 C.-W. Shu., S.~Osher.
 \newblock Efficient implementation of essentially non-oscillatory shock-capturing schemes.
\newblock {\em Journal of Computational Physics}. 77: 439--471 (1988).


\bibitem{Shu89}
 \rs{C.-W. Shu., S.~Osher.
 \newblock Efficient implementation of essentially non-oscillatory shock-capturing schemes, II.
\newblock {\em Journal of Computational Physics}. 83: 32--78 (1989).}

\bibitem{Shu98}
 C.-W. Shu.
 \newblock Essentially non-oscillatory and weighted essentially non-oscillatory
schemes for hyperbolic conservation laws.
\newblock In Cockburn, B., Johnson, C., Shu, C.-W.,
Tadmor, E., and Quarteroni, A. (eds.), {\em Advanced Numerical Approximation of Nonlinear
Hyperbolic Equations}, Lecture Notes in Mathematics, Vol. 1697, Springer, Berlin, pp.
325--432, 1998.

 \bibitem{Thomann18} 
A.~Thomann, M.~Zenk,    and   C.~Klingenberg.
\newblock A   second   order positivity   preserving   well-balanced   finite   volume   scheme   for   Euler equations   with   gravity   for   arbitrary   hydrostatic   equilibria.
\newblock {\em International Journal for numerical methods in fluids}. 89(11): 465--482 (2019).


 
\bibitem{Varma18}
D.~Varma  and P.~Chandrashekar. 
\newblock A  second  order  well-balanced  finite   volume   scheme   for   Euler   equations   with   gravity.
\newblock  {\em Computers \& Fluids}, 181: 292--313 (2019)

\bibitem{Xing17} Y. Xing.
\newblock{Numerical Methods for the Nonlinear Shallow Water Equations.}
\newblock In {\em Handbook of Numerical Methods} 
V. 18: 361--384, Springer 2017.


\bibitem{XingShu1} 
Y.~Xing and C.W.-Shu. 
\newblock High-Order well-balanced finite difference WENO
schemes with the
exact conservation property for the shallow water equations.
\newblock{ \em Journal of Computational Physics}, 208: 206--227 (2006)

\bibitem{XingShu2} 
Y.~Xing and C.W.-Shu. 
\newblock High-Order well-balanced finite difference WENO
schemes for a class of hyperbolic systems with
source terms.
\newblock{ \em Journal of Scientific Computing}, 27: 477--494 (2006)

\bibitem{Xing06}
Y.~Xing and C.-W. Shu.
\newblock High order well-balanced finite volume {WENO} schemes and
  discontinuous {Galerkin} methods for a class of hyperbolic systems with
  source terms.
\newblock {\em Journal of Computational Physics}, 214(2):567--598 (2006).


\bibitem{Xing13}
Y.~Xing and C.-W.~Shu.  
\newblock High order well-balanced WENO scheme for the gas dynamics equations under gravitational fields.
\newblock {\em Journal of  Scientific  Computing}, 54(2-3):645--662 (2013).

\bibitem{Xing14}
Y. Xing.
\newblock Exactly well-balanced discontinuous Galerkin methods for the shallow water equations
with moving water equilibrium.
\newblock {\em Journal of Computational Physics},  257:  536--553 (2014).




  

\end{thebibliography}
\end{document}